\numberwithin{equation}{section}
\title{\bf Ramanujan Cayley graphs of the generalized quaternion groups and the Hardy-Littlewood conjecture}
\author{Yoshinori YAMASAKI\thanks{Partially supported by Grant-in-Aid for Scientific Research (C) No. 15K04785.}
} 
\date{\today}
\theoremstyle{theorem}
\theoremstyle{definition}
\newtheorem*{multiproclaim}{\variable@name}
\theoremstyle{plain}
\newtheorem{theorem}{Theorem}[section]
\newtheorem{proposition}[theorem]{Proposition}
\newtheorem{lemma}[theorem]{Lemma}
\newtheorem{corollary}[theorem]{Corollary}
\theoremstyle{definition}
\newenvironment{MSC}{%
\smallbreak
\noindent \textbf{2010\ Mathematics Subject Classification\,:}}
\newenvironment{keywords}{%
\noindent\textbf{Key words and phrases\,:}\itshape}
\newcommand{\cS}{\mathcal{S}}
\newcommand{\cL}{\mathcal{L}}
\newcommand{\RB}{\mathrm{RB}}
\newcommand{\Irr}{\mathrm{Irr}}
\newcommand{\Spec}{\mathrm{Spec}}
\newcommand{\Gauss}[1]{\lfloor{#1}\rfloor}
\begin{document}
%============================================================================
%============================================================================

\setlength{\baselineskip}{15pt}
\maketitle

\begin{abstract}
 In this article, 
 we investigate the bound of the valency of the Cayley graphs of the generalized quaternion groups
 which guarantees to be Ramanujan.
 As is the cases of the cyclic and dihedral groups in our previous studies, 
 we show that the determination of the bound in a special setting
 is related to the classical Hardy-Littlewood conjecture for primes represented by a quadratic polynomial. 
\begin{MSC}
 {\it Primary}
 11M41,
% Other Dirichlet series and zeta functions For local and global ground fields,  
% 05C50;
% Graphs and linear algebra (matrices, eigenvalues, etc.) 
 {\it Secondary}
 05C25,
% Graphs and abstract algebra (groups, rings, fields, etc.) 
 05C75,
% Structural characterization of families of graphs
 11N32.
% Primes represented by polynomials
\end{MSC} 
\begin{keywords}
 Ramanujan graphs,
 generalized quaternion groups,
 Hardy-Littlewood conjecture
\end{keywords}
\end{abstract}

%%%%%%%%%%%%%%%%%%%%%%%%%%%%%%%%%%%%%%%%%%%%%%%%%%%%%%%%%%%%%%%%%%%%%%%%%%%%%%%%%%%%%%%%%
%%%%%%%%%%%%%%%%%%%%%%%%%%%%%%%%%%%%%%%%%%%%%%%%%%%%%%%%%%%%%%%%%%%%%%%%%%%%%%%%%%%%%%%%%

\section{Introduction}
\label{sec:1}

 Expander graph is a sparse graph having strong connectivity properties. 
 Because of its rich theory with many applications,
 it is widely studied in various fields of mathematics such as
 combinatorics, group theory, differential geometry and number theory
 (see \cite{HooryLinialWigderson2006,Lubotzky2012} for survey of the expander graphs). 
 In particular, Ramanujan graph, which is an optimal expander graph in the sense of Alon Boppana's theorem
 and was first defined in \cite{{LubotzkyPhillipsSarnak1988}},
 plays an important role in not only pure mathematics but also applied mathematics. 
 Actually, because a graph is Ramanujan if and only if
 the associated Ihara zeta function satisfies the ``Riemann hypothesis'',
 it has a special interest for number theorists, especially who study zeta functions (see, e.g., \cite{Terras2011}). 
 Moreover, from the fact that a random walk on a Ramanujan graph quickly converges to the uniform distribution,
 it is used to construct a cryptography hash function \cite{CharlesLauterGoren2009}.
 From these reasons, it is worth finding or constructing Ramanujan graphs as many as possible,
 however, it is in general difficult.

 In this paper, we consider the following problem on Ramanujan graphs.
 Naively, one easily imagines that, because a Ramanujan graph has a strong connectivity property, 
 if we have a Ramanujan graph, then there expects to be another Ramanujan graph around it ({\it cf}. \cite{AlonRoichman1994}).
 This means that, even if we get rid of some edges from the given Ramanujan graph anyhow, it may remain to be Ramanujan.
 Now our problem is to clarify how many edges we can freely remove from the given Ramanujan graph with remaining to be Ramanujan in a given family of graphs.
 In particular, as a first stage, we consider this problem starting from the trivial Ramanujan graph, that is, the complete graph,
 in a family of Cayley graphs of a fixed group.
 Notice that, in this setting, removing edges corresponds to reducing elements of a Cayley subset of the group.
 See the end of Section~\ref{sec:Preliminary} for more precise mathematical formulation of our problem.

 In \cite{HiranoKatataYamasaki1},
 we first investigated this problem for the cyclic groups.
 Moreover, in \cite{HiranoKatataYamasaki2},
 we studied it for the dihedral groups, which are non-abelian (simplest) extensions of the previous case 
 (we actually consider this problem for groups in the class of the Frobenius groups in \cite{HiranoKatataYamasaki2},
 which contains for example the semi-direct product of the cyclic groups and hence, especially, the dihedral groups).
 In both cases, we showed that the determination of the above maximal number of removable edges
 (it corresponds to $\tilde{l}$ in our formulation) is related to the classical Hardy-Littlewood conjecture on analytic number theory, 
 which asserts that every quadratic polynomials express infinitely many primes under some standard conditions,
 if the order of the group is odd prime (resp. twice odd prime) in the case of the cyclic group (resp. the dihedral group).
 In succession to these cases, in the present paper,
 we work the same problem for the generalized quaternion group $Q_{4m}$
 and actually obtain the similar result (Theorem~\ref{thm:mainD}) 
 if we choose the set of Cayley graphs suitably.
 Notice that we indeed consider a wider class of groups
 in the sense that $Q_{4m}$ can not be expressed as any semi-directed product of the cyclic groups.
 We also remark that our discussion may be applied to groups
 whose maximal degree of the irreducible representations is at most two. 

 We use the following notations in this paper.
 The set of all real numbers, integers and odd primes are denoted by $\mathbb{R}$, $\mathbb{Z}$ and $\mathbb{P}$, respectively.
 For $x\in\mathbb{R}$, $\Gauss{x}$ (resp. $\lceil x\rceil$) denote the largest (resp. smallest) integer less (resp. greater) than or equal to $x$.
 We also note that the most of our numerical computations are performed by using Mathematica. 

%%%%%%%%%%%%%%%%%%%%%%%%%%%%%%%%%%%%%%%%%%%%%%%%%%%%%%%%%%%%%%%%%%%%%%%%%%%%%%%%%%%%%%%%%
%%%%%%%%%%%%%%%%%%%%%%%%%%%%%%%%%%%%%%%%%%%%%%%%%%%%%%%%%%%%%%%%%%%%%%%%%%%%%%%%%%%%%%%%%

\section{Preliminary}
\label{sec:Preliminary}

 In this section, we prepare some definitions and notations of graph theory,
 which are necessary for our discussion (see, more precisely, \cite{KrebsShaheen2011}). 
 Throughout this paper, all graphs are assumed to be finite, undirected, connected, simple and regular.
 
\smallbreak 

 Let $X$ be a $k$-regular graph with $m$-vertices. 
 The adjacency matrix $A_X$ of $X$ is the symmetric matrix of size $m$
 whose entry is $1$ if the corresponding pair of vertices are connected by an edge and $0$ otherwise. 
 We call the eigenvalues of $A_X$ the eigenvalues of $X$. 
 Let $\Lambda(X)$ be the set of all eigenvalues of $X$.
 We know that it can be written as  
 $\Lambda(X)=\bigl\{k=\lambda_0>\lambda_1\ge\cdots\ge\lambda_{m-1}\bigr\}\subset [-k,k]$.
 Let $\lambda(X)$ be the largest non-trivial eigenvalue of $X$ in the sense of absolute value;  
 $\lambda(X)=\max\bigl\{|\lambda|\,\bigl|\,\lambda\in \Lambda{(X)}, \ |\lambda|\ne k\bigr\}$.
 Then, $X$ is called Ramanujan if the inequality $\lambda(X)\le 2\sqrt{k-1}$ holds. 
 Here the constant $2\sqrt{k-1}$ is called the Ramanujan bound for $X$ and is denoted by $\RB(X)$. 

 Let $G$ be a finite group with the identity element $1$.
 Let $S$ be a Cayley subset of $G$, that is, $S$ is a symmetric generating subset of $G$ without $1$.
 We denote by $X(S)$ the Cayley graph of $G$ with respect to the Cayley subset $S$.
 This is $|S|$-regular graph whose vertex set is $G$ and edge set $\{(x,y)\in G^2\,|\,x^{-1}y\in S\}$. 
 Let $\cS_G$ be the set of all Cayley subset of $G$. 
 In what follows, for $S\in\cS$,
 we write $\Lambda(S)=\Lambda(X(S))$, $\lambda(S)=\lambda(X(S))$, $\RB(S)=\RB(X(S))$, and so on.  
 It is well known that the eigenvalues of $X(S)$ can be described in terms of the irreducible representations of $G$ as follows.

\begin{lemma}
[{\it cf}. \cite{Babai1979}]
\label{lem:eighnCayley}
 Let $G$ be a finite group and $\Irr(G)$ the set of all equivalence classes of the irreducible representations of $G$.
 Then, for $S\in\cS_G$, we have 
\[
 \Lambda(S)
=\bigcup_{\pi\in\Irr(G)}d_{\pi}\cdot \Spec(M_{\pi}(S)),
\]
 where, for $\pi\in \Irr(G)$, $d_{\pi}$ is the degree of $\pi$,
 $M_{\pi}(S)=\sum_{s\in S}\pi(s)$ and $\Spec(M_{\pi}(S))$ is the set of all eigenvalues of $M_{\pi}(S)$.
 Here, we understand that an element in $\Spec(M_{\pi}(S))$ is counted $d_{\pi}$ times in $d_{\pi}\cdot \Spec(M_{\pi}(S))$.
\qed
\end{lemma}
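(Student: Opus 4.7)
The plan is to identify the adjacency matrix $A_{X(S)}$ with a natural operator on the group algebra $\mathbb{C}[G]$ and then exploit the decomposition of the regular representation into irreducibles. First I would label the vertices of $X(S)$ by the elements of $G$ so that the rows and columns of $A_{X(S)}$ are indexed by $G$. By the very definition of the edge set, the $(x,y)$-entry equals $1$ precisely when $x^{-1}y \in S$, which is exactly the matrix coefficient of right multiplication by $T := \sum_{s \in S} s \in \mathbb{C}[G]$ in the standard basis $\{e_g\}_{g \in G}$. In other words, $A_{X(S)}$ is the matrix, in the standard basis, of $\rho_{\mathrm{reg}}(T)$, where $\rho_{\mathrm{reg}}$ is the regular representation of $G$.

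Next I would invoke the classical decomposition
\[
 \rho_{\mathrm{reg}} \;\cong\; \bigoplus_{\pi \in \Irr(G)} d_\pi \cdot \pi,
\]
where each irreducible $\pi$ occurs with multiplicity equal to its degree $d_\pi$. Since $T$ lies in the group algebra, this decomposition is compatible with the action of $T$, and one obtains
\[
 \rho_{\mathrm{reg}}(T) \;\cong\; \bigoplus_{\pi \in \Irr(G)} d_\pi \cdot \pi(T)
 \;=\; \bigoplus_{\pi \in \Irr(G)} d_\pi \cdot M_\pi(S),
\]
where $d_\pi \cdot M_\pi(S)$ denotes the direct sum of $d_\pi$ copies of $M_\pi(S)$. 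Because the spectrum of a direct sum is the (multi-)union of the spectra of the summands, taking spectra on both sides yields the claimed identity, with each eigenvalue of $M_\pi(S)$ appearing $d_\pi$ times, as stipulated by the convention in the statement.

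The only real subtlety is bookkeeping: one must make sure that the identification $A_{X(S)} = \rho_{\mathrm{reg}}(T)$ is consistent with whichever convention (left vs.\ right multiplication) one adopts, and that the symmetry hypothesis $S = S^{-1}$ makes $T$ self-adjoint so that the Hermitian spectral theory applies on both sides. I would resolve this by a short verification that swapping between left and right regular representations gives isomorphic operators, hence the same spectrum. After that the result is just the standard spectral consequence of Maschke/Wedderburn decomposition, and no hard estimate is needed.
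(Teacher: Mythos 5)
Your argument is correct and is precisely the standard proof of this classical fact (the paper itself states the lemma without proof, citing Babai): identify $A_{X(S)}$ with the regular representation applied to $\sum_{s\in S}s\in\mathbb{C}[G]$, decompose the regular representation as $\bigoplus_{\pi\in\Irr(G)}d_\pi\cdot\pi$, and take spectra. Your attention to the left/right convention and to the role of $S=S^{-1}$ in making the operator well-defined and symmetric is exactly the right bookkeeping, and nothing further is needed.
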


 We here explain our problem on Ramanujan graphs. 
 For a set $\cS\subset \cS_G$ of Cayley subsets of $G$,
 let $\cL=\cL_{G,\cS}=\{l(S)\,|\,S\in\cS\}$ where $l(S)=|G\setminus S|=|G|-|S|$ is the covalency of $S\in\cS$.
 Then, we have the decomposition $\cS=\sqcup_{l\in\cL}\cS_l$ with $\cS_l=\{S\in\cS\,|\,l(S)=l\}$.
 Now our aim is to determine the bound 
\[
 \tilde{l}=\tilde{l}_{G,\cS}
=\max\{l\in\cL\,|\,\text{$X(S)$ is Ramanujan for all $S\in \cS_k$ ($1\le k\le l$)}\}.
\]
 Remark that $\tilde{l}\ge 1$ if $G\setminus\{1\}\in\cS_1$ 
 because $X(G\setminus\{1\})$ is the complete graph $K_{|G|}$ with $|G|$ vertices, which is a (trivial) Ramanujan graph.
 Hence, in this case, roughly speaking, 
 $\tilde{l}$ represents the maximal number of removable edges from the complete graph $K_{|G|}$
 keeping to be Ramanujan.

 In this paper,
 we investigate $\tilde{l}$ when $G$ is the generalized quaternion group.
 
%%%%%%%%%%%%%%%%%%%%%%%%%%%%%%%%%%%%%%%%%%%%%%%%%%%%%%%%%%%%%%%%%%%%%%%%%%%%%%%%%%%%%%%%%
%%%%%%%%%%%%%%%%%%%%%%%%%%%%%%%%%%%%%%%%%%%%%%%%%%%%%%%%%%%%%%%%%%%%%%%%%%%%%%%%%%%%%%%%%

\section{Cayley graphs of the generalized quaternion groups}

 For a positive integer $m$, 
 the generalized quaternion group $Q_{4m}$ is defined by
\[
 Q_{4m}
=\left\langle x,y\,\left|\,x^{2m}=1,\ x^m=y^2,\ y^{-1}xy=x^{-1} \right.\right\rangle.
\]
 This is non-commutative unless $m=1$ and
 can not be expressed as a semi-direct product of any pair of subgroups of $Q_{4m}$.  
 One easily sees that the order of $Q_{4m}$ is $4m$
 because it has the expression 
\begin{align*}
 Q_{4m}
=\{x^ky^l\,|\,0\le k\le 2m-1, \ l=0,1\}
=\langle x\rangle\sqcup \langle x\rangle y,
\end{align*}
 where $\langle x\rangle=\{x^k\,|\,0\le k\le 2m-1\}$ and $\langle x\rangle y=\{x^ky\,|\,0\le k\le 2m-1\}$.
 Notice that $(x^k)^{-1}=x^{2m-k}$ and $(x^ky)^{-1}=x^{m+k}y$.
 
 To calculate the eigenvalues of the Cayley graph of $Q_{4m}$,
 we need the information about the conjugacy classes and the irreducible representations of $Q_{4m}$. 
 For $z\in Q_{4m}$, let $C(z)$ be the conjugacy class of $Q_{4m}$ containing $z$.
 Then, the following exhausts all conjugacy classes of $Q_{4m}$; 
 $C(1)=\{1\}$,
 $C(x^{k})=\{x^{k},x^{2m-k}\}$ ($1\le k\le m-1$),  
 $C(x^{m})=\{x^m\}$, 
 $C(y)=\{x^{2k}y\,|\,0\le k\le m-1\}$
 and $C(xy)=\{x^{2k+1}y\,|\,0\le k\le m-1\}$.
 Moreover, the irreducible representations of $Q_{4m}$ are given as follows;
 $\chi_1={\bf 1}$ (the trivial character), $\chi_2$, $\chi_3$ and $\chi_4$ which are of degree $1$ and 
 $\varphi_j$ ($1\le j\le m-1$) of degree $2$.
 We give the values of these representations in Table~1.
 Here, $\omega=e^{\frac{2\pi i}{2m}}$.

\begin{table}[htbp]
\begin{center}
{\small 
\begin{tabular}{ccc}
\renewcommand{\arraystretch}{1.3}
\begin{tabular}{c||c|c}
  & $x^k$ & $x^ky$ \\
\hline
\hline
 $\chi_1$ & $1$ & $1$  \\
\hline
 $\chi_{2}$ & $1$ & $-1$ \\
\hline
 $\chi_{3}$ & $(-1)^k$ & $i(-1)^k$ \\
\hline
 $\chi_{4}$ & $(-1)^{k}$ & $i(-1)^{k+1}$ \\
\hline
 $\varphi_{j}$ & $\left[\begin{array}{cc}\omega^{jk}&0\\0&\omega^{-jk}\end{array}\right]$ & $\left[\begin{array}{cc}0&\omega^{jk}\\(-1)^j\omega^{-jk}&0\end{array}\right]$
\end{tabular}
&
\qquad
&
\renewcommand{\arraystretch}{1.3}
\begin{tabular}{c||c|c}
  & $x^k$ & $x^ky$ \\
\hline
\hline
 $\chi_1$ & $1$ & $1$  \\
\hline
 $\chi_{2}$ & $1$ & $-1$ \\
\hline
 $\chi_{3}$ & $(-1)^k$ & $(-1)^k$ \\
\hline
 $\chi_{4}$ & $(-1)^{k}$ & $(-1)^{k+1}$ \\
\hline
 $\varphi_{j}$ & $\left[\begin{array}{cc}\omega^{jk}&0\\0&\omega^{-jk}\end{array}\right]$ & $\left[\begin{array}{cc}0&\omega^{jk}\\(-1)^j\omega^{-jk}&0\end{array}\right]$
\end{tabular}
\end{tabular}
}
\caption{The tables of the values of the irreducible representations of $Q_{4m}$:
 the left one is the case of odd $m$ and the right one is of even $m$.}
\end{center}
\end{table}

 From now on, we let $\cS=\cS_{Q_{4m}}$ be the set of all Cayley subsets of $Q_{4m}$.
 Let us calculate the eigenvalues of the Cayley graph $X(S)$ for $S\in\cS$.
 Put $S_1=S\cap \langle x\rangle$ and $S_2=S\cap \langle x\rangle y$ so that we can write   
\[
 S=S_1\sqcup S_2.
\]
 Moreover, put $l_1(S)=2m-|S_1|$ and $l_2(S)=2m-|S_2|$ so that $l(S)=l_1(S)+l_2(S)$.
 Notice that $S_1\ne \langle x\rangle$ since $1\notin S$ and hence $l_1(S)>0$ and
 $S_2\ne \emptyset$ because $S$ generates $Q_{4m}$ and therefore $l_2(S)<2m$.
 One sees that, because $S$ is symmetric, both $S_1$ and $S_2$ are also symmetric.
 This implies that they are respectively expressed as  
\begin{equation}
\label{for:Cayleysubset}
\begin{array}{rl}
 S_1
&=\displaystyle{\bigsqcup_{x^{k_1}\in S\atop 1\le k_1\le m-1}}\left\{x^{k_1},x^{2m-k_1}\right\} \sqcup \left\{x^{m}\right\}^{\delta},\\[5pt]
 S_2
&=\displaystyle{\bigsqcup_{x^{k_2}y\in S\atop 0\le k_2\le m-1}}\left\{x^{k_2}y,x^{m+k_2}y\right\},
\end{array}
\end{equation}
 where $\delta=\delta(S)=1$ if $x^m\in S$ and $0$ otherwise. 
 Here, we understand that $A^0=\emptyset$ and $A^1=A$ for any set $A$.
 From these expressions, we have  
 $l_2(S)\equiv 0$ \!\!\!\! $\pmod{2}$ and $l_1(S)\equiv l(S)\equiv \delta$ \!\!\!\! $\pmod{2}$.
 Based on the above expression,
 we obtain the following decomposition of $\cS$;
\begin{equation}
 \cS
=\bigsqcup_{l\in\cL}\cS_l
=\bigsqcup_{l\in\cL}\bigsqcup_{(l_1,l_2)\in\cL_{l}}\cS_{l_1,l_2},
\end{equation} 
 where 
\[
 \cL_{l}
=\left\{(l_1,l_2)\in\mathbb{Z}^2\,\left|\,
\begin{array}{l}
 0<l_1\le 2m, \ l_1\equiv l \!\!\! \pmod{2}, \\
 0\le l_2<2m, \ l_2\equiv 0 \!\!\! \pmod{2},
\end{array}
 \ l_1+l_2=l
\right.
\right\}
\]
 and $\cS_{l_1,l_2}=\{S\in\cS\,|\,l_1(S)=l_1,\,l_2(S)=l_2\}$ for $l\in\cL$ and $(l_1,l_2)\in\cL_l$.
  
 Put 
\begin{align*}
 \sigma^{\mathrm{e}}_{1}=\sigma^{\mathrm{e}}_{1}(S)
&=\#\{k_1\in\mathbb{Z}\,|\,\text{$1\le k_1\le m-1$, $x^{k_1}\in S$, $k_1\equiv 0$ \!\!\!\!\!\! $\pmod{2}$}\},\\
 \sigma^{\mathrm{o}}_{1}=\sigma^{\mathrm{o}}_{1}(S)
&=\#\{k_1\in\mathbb{Z}\,|\,\text{$1\le k_1\le m-1$, $x^{k_1}\in S$, $k_1\equiv 1$ \!\!\!\!\!\! $\pmod{2}$}\},\\
 \sigma^{\mathrm{e}}_{2}=\sigma^{\mathrm{e}}_{2}(S)
&=\#\{k_2\in\mathbb{Z}\,|\,\text{$0\le k_2\le m-1$, $x^{k_2}y\in S$, $k_2\equiv 0$ \!\!\!\!\!\! $\pmod{2}$}\},\\
 \sigma^{\mathrm{o}}_{2}=\sigma^{\mathrm{o}}_{2}(S)
&=\#\{k_2\in\mathbb{Z}\,|\,\text{$0\le k_2\le m-1$, $x^{k_2}y\in S$, $k_2\equiv 1$ \!\!\!\!\!\! $\pmod{2}$}\}.
\end{align*}
 Using these notations, it can be written as 
 $|S_1|=2(\sigma^{\mathrm{e}}_{1}+\sigma^{\mathrm{o}}_{1})+\delta$ and $|S_2|=2(\sigma^{\mathrm{e}}_{2}+\sigma^{\mathrm{0}}_{2})$.
 Note that 
\[
 0\le \sigma^{\mathrm{e}}_{1}\le\frac{m-1}{2}, \ \  
 0\le \sigma^{\mathrm{o}}_{1}\le\frac{m-1}{2}, \ \ 
 0\le \sigma^{\mathrm{e}}_{2}\le\frac{m+1}{2}, \ \ 
 0\le \sigma^{\mathrm{o}}_{2}\le\frac{m-1}{2}
\]
 if $m$ is odd and
\[
 0\le \sigma^{\mathrm{e}}_{1}\le\frac{m}{2}-1, \ \  
 0\le \sigma^{\mathrm{o}}_{1}\le\frac{m}{2}, \ \ 
 0\le \sigma^{\mathrm{e}}_{2}\le\frac{m}{2}, \ \
 0\le \sigma^{\mathrm{o}}_{2}\le\frac{m}{2}
\]
 otherwise.

 From Lemma~\ref{lem:eighnCayley} together with the expression \eqref{for:Cayleysubset} of the Cayley subset,
 one can explicitly calculate the eigenvalues of $X(S)$ as follows.
 
\begin{lemma}
\label{lem:eigenQ4m}
 For $S\in\cS$,
 we have  
\begin{align*}
 \Lambda(S)
&=1\cdot \{\lambda_{1},\lambda_{2},\lambda_{3},\lambda_{4}\}\cup
\bigcup^{m-1}_{j=1}2\cdot \left\{\mu_j^{+},\mu_j^{-}\right\}.
\end{align*}
 Here, for $1\le j\le m-1$, $\mu^{\pm}_{j}=\mu^{\pm}_{j}(S)$ is given by
 $\mu^{\pm}_{j}=z_j\pm |w_j|$ with 
\begin{align*}
 z_{j}=z_j(S)
&=\sum_{x^{k_1}\in S\atop 0\le k_1\le 2m-1}\omega^{jk_1}
=\sum_{x^{k_1}\in S\atop 1\le k_1\le m-1}(\omega^{jk_1}+\omega^{-jk_1})+\delta (-1)^j, \\
 w_{j}=w_j(S)
&=\sum_{x^{k_2}y\in S\atop 0\le k_2\le 2m-1}\omega^{jk_2}
=\left(1+(-1)^j\right)\sum_{x^{k_2}y\in S\atop 0\le k_2\le m-1}\omega^{jk_2},
\end{align*}
 and, for $1\le i\le 4$, $\lambda_i=\lambda_i(S)$ are respectively given as follows.
\begin{itemize}
\item[$(1)$]
 When $m$ is odd,
\begin{align*}
 \lambda_1
&=2(\sigma^{\mathrm{e}}_{1}+\sigma^{\mathrm{o}}_{1})+\delta +2(\sigma^{\mathrm{e}}_{2}+\sigma^{\mathrm{o}}_{2}), \\ 
 \lambda_2
&=2(\sigma^{\mathrm{e}}_{1}+\sigma^{\mathrm{o}}_{1})+\delta -2(\sigma^{\mathrm{e}}_{2}+\sigma^{\mathrm{o}}_{2}), \\
 \lambda_3
&=2(\sigma^{\mathrm{e}}_{1}-\sigma^{\mathrm{o}}_{1})-\delta, \\
 \lambda_4
&=2(\sigma^{\mathrm{e}}_{1}-\sigma^{\mathrm{o}}_{1})-\delta.
\end{align*}
\item[$(2)$]
 When $m$ is even,
\begin{align*}
 \lambda_1
&=2(\sigma^{\mathrm{e}}_{1}+\sigma^{\mathrm{o}}_{1})+\delta +2(\sigma^{\mathrm{e}}_{2}+\sigma^{\mathrm{o}}_{2}), \\
 \lambda_2
&=2(\sigma^{\mathrm{e}}_{1}+\sigma^{\mathrm{o}}_{1})+\delta -2(\sigma^{\mathrm{e}}_{2}+\sigma^{\mathrm{o}}_{2}), \\
 \lambda_3
&=2(\sigma^{\mathrm{e}}_{1}-\sigma^{\mathrm{o}}_{1})+\delta +2(\sigma^{\mathrm{e}}_{2}-\sigma^{\mathrm{o}}_{2}), \\
 \lambda_4
&=2(\sigma^{\mathrm{e}}_{1}-\sigma^{\mathrm{o}}_{1})+\delta-2(\sigma^{\mathrm{e}}_{2}-\sigma^{\mathrm{o}}_{2}).
\end{align*}
\end{itemize}
\end{lemma}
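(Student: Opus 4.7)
The plan is to apply Lemma~\ref{lem:eighnCayley} directly using the character table of $Q_{4m}$ displayed in Table~1, treating the four $1$-dimensional representations $\chi_1,\chi_2,\chi_3,\chi_4$ and the $m-1$ two-dimensional representations $\varphi_j$ separately. For each $\pi\in\Irr(Q_{4m})$, I would form $M_{\pi}(S)=\sum_{s\in S_1}\pi(s)+\sum_{s\in S_2}\pi(s)$ using the decomposition $S=S_1\sqcup S_2$ from \eqref{for:Cayleysubset}, then read off its spectrum and multiply multiplicities by $d_\pi$.

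For the degree-one characters the ``eigenvalue'' is simply $\lambda_i=\sum_{s\in S}\chi_i(s)$, so everything reduces to counting. Splitting the sum according as $s\in S_1$ or $s\in S_2$, pairing $x^{k_1}$ with $x^{2m-k_1}$ (which have the same residue mod $2$) and pairing $x^{k_2}y$ with $x^{m+k_2}y$ (which have the same residue mod $2$ iff $m$ is even), the values of $\chi_i$ group into blocks of size two that contribute $\pm 2$ according to the parity of $k_j$, plus the contribution $\chi_i(x^m)\delta=(\pm 1)^m\delta$ of the central element. This is exactly where the two cases of the lemma branch: for odd $m$ the characters $\chi_3,\chi_4$ take conjugate imaginary values on $S_2$, so the $S_2$-contribution to $\lambda_3,\lambda_4$ collapses (because the pairing by $k_2\mapsto m+k_2$ forces cancellation when $m$ is odd), while for even $m$ both $\chi_3$ and $\chi_4$ are real and the $S_2$-sum survives as $\pm 2(\sigma_2^{\mathrm e}-\sigma_2^{\mathrm o})$. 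Writing the $S_1$-sum in terms of $\sigma_1^{\mathrm e},\sigma_1^{\mathrm o},\delta$ and the $S_2$-sum in terms of $\sigma_2^{\mathrm e},\sigma_2^{\mathrm o}$ then yields the formulas for $\lambda_1,\ldots,\lambda_4$ in each parity case.

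For the two-dimensional $\varphi_j$, the matrix $M_{\varphi_j}(S)$ is block-diagonal-plus-off-diagonal:
\[
 M_{\varphi_j}(S)
=\sum_{x^{k_1}\in S}\begin{pmatrix}\omega^{jk_1}&0\\0&\omega^{-jk_1}\end{pmatrix}
+\sum_{x^{k_2}y\in S}\begin{pmatrix}0&\omega^{jk_2}\\(-1)^j\omega^{-jk_2}&0\end{pmatrix}.
\]
Using the symmetry $x^{k_1}\in S\Leftrightarrow x^{2m-k_1}\in S$ in the diagonal sum and the symmetry $x^{k_2}y\in S\Leftrightarrow x^{m+k_2}y\in S$ in the off-diagonal sum, both diagonal entries equal $z_j$ (which is therefore real), and the upper-right entry equals $w_j=(1+(-1)^j)\sum_{0\le k_2\le m-1,\,x^{k_2}y\in S}\omega^{jk_2}$, matching the statement. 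Then
\[
 M_{\varphi_j}(S)=\begin{pmatrix}z_j & w_j \\ (-1)^j\overline{w_j} & z_j\end{pmatrix},
\]
whose characteristic polynomial is $(t-z_j)^2-(-1)^j|w_j|^2$. When $j$ is even this gives eigenvalues $z_j\pm|w_j|$; when $j$ is odd the factor $(1+(-1)^j)$ forces $w_j=0$, and the repeated eigenvalue $z_j$ agrees with the convention $\mu_j^\pm=z_j\pm|w_j|$. Finally, Lemma~\ref{lem:eighnCayley} instructs us to count each $\mu_j^\pm$ with multiplicity $d_{\varphi_j}=2$, giving the union displayed in the lemma.

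The computation is essentially bookkeeping, and the only genuine subtlety is the parity-of-$m$ dichotomy in the character values of $\chi_3,\chi_4$ combined with the symmetry constraints on $S_2$; this is what causes the $S_2$-contribution to $\lambda_3,\lambda_4$ to vanish when $m$ is odd but persist when $m$ is even. Keeping the two character tables straight and correctly tracking the $(-1)^m$ factor coming from $\chi_i(x^m)$ is where care is needed, but no further ideas are required.
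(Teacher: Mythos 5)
Your proposal is correct and follows exactly the route of the paper's (very terse) proof: apply Lemma~\ref{lem:eighnCayley} with the character tables, read off $\lambda_i=M_{\chi_i}(S)$ by parity bookkeeping over the pairs $\{x^{k_1},x^{2m-k_1}\}$ and $\{x^{k_2}y,x^{m+k_2}y\}$, and diagonalize $M_{\varphi_j}(S)=\bigl[\begin{smallmatrix}z_j&w_j\\(-1)^j\overline{w_j}&z_j\end{smallmatrix}\bigr]$ (which coincides with the paper's $\bigl[\begin{smallmatrix}z_j&w_j\\\overline{w_j}&z_j\end{smallmatrix}\bigr]$ since $w_j=0$ for odd $j$). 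All the parity checks, the vanishing of the $S_2$-contribution to $\lambda_3,\lambda_4$ for odd $m$, and the multiplicity-$2$ counting are handled correctly.
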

\begin{proof}
 These are directly obtained from the above tables of the values of the irreducible representations of $Q_{4m}$.
 We notice that $\lambda_i=M_{\chi_i}(S)$ and 
 $\{\mu_j^{+},\mu_j^{-}\}=\Spec(M_{\varphi_j}(S))=\Spec\left(\left[\begin{array}{cc}z_{j}&w_j\\\overline{w_j}&z_j\end{array}\right]\right)$.
\end{proof}

 Remark that $\lambda_1=|S|$, which corresponds to the trivial character, is the largest eigenvalue of $X(S)$.
 We also notice that $z_j\in\mathbb{R}$ and $w_j=0$ if $j$ is odd.

 The following lemma is useful in the case of estimating the eigenvalues of $X(S)$
 corresponding to $\lambda_i$. 
 
\begin{lemma}
\label{lem:onedimeigen}
 Fix $l\in\cL$ and $(l_1,l_2)\in\cL_l$.
 Let $S\in\cS_{l_1,l_2}$.
 Then, we have $\lambda_1=4m-l$ and $\lambda_2=-l+2l_2$.
 Moreover, 
\begin{itemize}
\item[$(1)$]
 \ when $m$ is odd and
\begin{description}
\item[$(i)$]
 $l$ is odd, we have $-(l-l_2)\le \lambda_3=\lambda_4\le l-l_2-2$.
 The absolute values $|\lambda_3|=|\lambda_4|$ of $\lambda_3$ and $\lambda_4$
 take the maximum value $l-l_2$ if and only if $(\sigma^{e}_1,\sigma^{o}_1)=(\frac{m+1}{2}-\frac{l-l_2+1}{2},\frac{m-1}{2})$.
\item[$(ii)$]
 $l$ is even, we have  $-(l-l_2-2)\le \lambda_3=\lambda_4\le l-l_2-2$.
 The absolute values $|\lambda_3|=|\lambda_4|$ of $\lambda_3$ and $\lambda_4$
 take the maximum value $l-l_2-2$ if and only if $(\sigma^{e}_1,\sigma^{o}_1)=(\frac{m+1}{2}-\frac{l-l_2}{2},\frac{m-1}{2})$ or $(\frac{m-1}{2},\frac{m+1}{2}-\frac{l-l_2}{2})$.
\end{description}
\item[$(2)$]
 \ when $m$ is even, we have
 $-l\le \lambda_3\le l+2\delta-4$ and  $-l\le \lambda_4\le l+2\delta-4$.
 The absolute values $|\lambda_3|$ of $\lambda_3$ takes the maximum value $l$ if and only if
 $(\sigma^{\mathrm{e}}_{1},\sigma^{\mathrm{o}}_{1})=(\frac{m}{2}-\frac{l-l_2+\delta}{2},\frac{m}{2})$ and $(\sigma^{\mathrm{e}}_{2},\sigma^{\mathrm{o}}_{2})=(\frac{m}{2}-\frac{l_2}{2},\frac{m}{2})$.
 Similarly, the absolute values $|\lambda_4|$ of $\lambda_4$ takes the maximum value $l$ if and only if
 $(\sigma^{\mathrm{e}}_{1},\sigma^{\mathrm{o}}_{1})=(\frac{m}{2}-\frac{l-l_2+\delta}{2},\frac{m}{2})$ and $(\sigma^{\mathrm{e}}_{2},\sigma^{\mathrm{o}}_{2})=(\frac{m}{2},\frac{m}{2}-\frac{l_2}{2})$.
\end{itemize}
\end{lemma}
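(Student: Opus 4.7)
My plan is to proceed directly from the explicit formulas of Lemma~\ref{lem:eigenQ4m} by substituting the constraints $|S_1|=2(\sigma^{\mathrm{e}}_1+\sigma^{\mathrm{o}}_1)+\delta=2m-l_1$ and $|S_2|=2(\sigma^{\mathrm{e}}_2+\sigma^{\mathrm{o}}_2)=2m-l_2$ together with $l_1+l_2=l$. The claimed identities for $\lambda_1$ and $\lambda_2$ then follow immediately in both parity cases of $m$: indeed, $\lambda_1=|S_1|+|S_2|=4m-l$ and $\lambda_2=|S_1|-|S_2|=l_2-l_1=-l+2l_2$.

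For the bounds on $\lambda_3,\lambda_4$ I would recast the problem as a bounded linear optimization. Set $D_i:=\sigma^{\mathrm{e}}_i-\sigma^{\mathrm{o}}_i$; then $N_i:=\sigma^{\mathrm{e}}_i+\sigma^{\mathrm{o}}_i$ is determined by $l_i$ and $\delta$, and the admissible values of $D_i$ form an integer interval whose endpoints are controlled by the box constraints $0\le\sigma^{\mathrm{e}}_i,\sigma^{\mathrm{o}}_i\le\lfloor(m-1)/2\rfloor$ (or $\lfloor m/2\rfloor$, depending on the parity of $m$ and the index $i$). In case~(1), $\lambda_3=\lambda_4=2D_1-\delta$ depends only on $D_1$, reducing the problem to one dimension; in case~(2), $\lambda_3=2D_1+\delta+2D_2$ and $\lambda_4=2D_1+\delta-2D_2$ decouple into independent optimizations of $D_1$ and $D_2$.

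Once the problem is in this form, the argument is a short calculation organized by the parity of $l$. In case~(1)(i) ($l$ odd, forcing $\delta=1$ and $l_1=l-l_2$ odd), one has $N_1=m-(l_1+1)/2$, and the binding constraint $\sigma^{\mathrm{o}}_1\le(m-1)/2$ yields $-(l_1-1)/2\le D_1\le(l_1-1)/2$, hence $-(l-l_2)\le\lambda_3\le l-l_2-2$; the lower endpoint is attained precisely when $\sigma^{\mathrm{o}}_1=(m-1)/2$ and $\sigma^{\mathrm{e}}_1=(m-l_1)/2$, which is exactly the configuration stated. The subcase (ii) ($l$ even, $\delta=0$) is handled symmetrically, producing two extremal configurations because the symmetry $D_1\mapsto -D_1$ is no longer broken by $\delta$. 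Case~(2) follows by pairing the signed extrema of $D_1$ and $D_2$ compatibly with the signs of the $D_2$-terms in $\lambda_3$ versus $\lambda_4$, which yields the two distinct configurations attributed to $|\lambda_3|$ and $|\lambda_4|$ respectively.

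The main obstacle is bookkeeping rather than any substantial obstruction: the extremal values of $|D_i|$ change regime according to whether $N_i$ lies below or above the midpoint of its admissible interval, so one must verify that the displayed configurations are in fact feasible (non-negative integers within the prescribed range). When they are, uniqueness of the extremizer follows from the observation that at least one box constraint is saturated there; when $l$ is so large that the claimed configuration fails to be feasible, the stated bounds still hold but the ``if and only if'' clause becomes vacuous, since then no $S\in\cS_{l_1,l_2}$ attains the extremal absolute value.
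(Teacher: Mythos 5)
Your proposal is correct and follows essentially the same route as the paper: substitute the fixed sums $\sigma^{\mathrm{e}}_i+\sigma^{\mathrm{o}}_i$ determined by $l_1,l_2,\delta$ into the formulas of Lemma~\ref{lem:eigenQ4m}, then bound the differences $\sigma^{\mathrm{e}}_i-\sigma^{\mathrm{o}}_i$ via the box constraints and identify the saturating configurations. Your closing remark on the feasibility regime (when the binding constraints switch from the upper to the lower bounds on $\sigma^{\mathrm{e}}_i,\sigma^{\mathrm{o}}_i$) is a point the paper's proof silently glosses over, and handling it as you do keeps the statement valid for all $(l_1,l_2)\in\cL_l$.
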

\begin{proof}
 The claims on $\lambda_1$ and $\lambda_2$ are clear from Lemma~\ref{lem:eigenQ4m}
 with the expressions $l_1=2m-(2(\sigma^{\mathrm{e}}_{1}+\sigma^{\mathrm{o}}_{1})+\delta)$, $l_2=2m-2(\sigma^{\mathrm{o}}_{2}+\sigma^{\mathrm{o}}_{2})$ and $l_1=l-l_2$.
 Now, let us consider the other cases.
 
 When $m$ is odd,
 since $0\le \sigma^{\mathrm{e}}_{1}\le \frac{m-1}{2}$ and $0\le \sigma^{\mathrm{o}}_{1}\le \frac{m-1}{2}$,
 we see that $\sigma^{\mathrm{e}}_{1}+\sigma^{\mathrm{o}}_{1}=m-\frac{l-l_2+\delta}{2}$ implies that
 $-\frac{l-l_2+\delta-2}{2}\le \sigma^{\mathrm{e}}_{1}-\sigma^{\mathrm{o}}_{1}\le \frac{l-l_2+\delta-2}{2}$.
 This shows that $-(l-l_2-2)-2\delta\le \lambda_3=\lambda_4\le l-l_2-2$.
 Now $|\lambda_3|=|\lambda_4|$ takes maximum value $l-l_2$ if $l$ is odd,
 which is indeed realized when $-\frac{l-l_2+\delta-2}{2}=\sigma^{\mathrm{e}}_{1}-\sigma^{\mathrm{o}}_{1}$ (with $\delta=1$),
 and $l-l_2-2$ otherwise, which is realized when $\sigma^{\mathrm{e}}_{1}-\sigma^{\mathrm{o}}_{1}=\pm \frac{l-l_2+\delta-2}{2}$ (with $\delta=0$).

 When $m$ is even,
 since $0\le \sigma^{\mathrm{e}}_{1}\le \frac{m}{2}-1$, $0\le \sigma^{\mathrm{o}}_{1}\le \frac{m}{2}$,
 $0\le \sigma^{\mathrm{e}}_{2}\le \frac{m}{2}$ and $0\le \sigma^{\mathrm{o}}_{2}\le \frac{m}{2}$,
 we see that $\sigma^{\mathrm{e}}_{1}+\sigma^{\mathrm{o}}_{1}=m-\frac{l-l_2+\delta}{2}$ and $\sigma^{\mathrm{e}}_{2}+\sigma^{\mathrm{o}}_{2}=m-\frac{l_2}{2}$ imply that
 $-\frac{l-l_2+\delta}{2}\le \sigma^{\mathrm{e}}_{1}-\sigma^{\mathrm{o}}_{1}\le \frac{l-l_2+\delta-4}{2}$ and $-\frac{l_2}{2}\le \sigma^{\mathrm{e}}_{2}-\sigma^{\mathrm{o}}_{2}\le \frac{l_2}{2}$, respectively.
 This shows that $-l\le \lambda_3\le l+2\delta-4$ and $-l\le \lambda_4\le l+2\delta-4$.
 Similarly as the above, $|\lambda_3|$ takes maximum value $l$ if
 $-\frac{l-l_2+\delta}{2}=\sigma^{\mathrm{e}}_{1}-\sigma^{\mathrm{o}}_{1}$ and $-\frac{l_2}{2}=\sigma^{\mathrm{e}}_{2}-\sigma^{\mathrm{o}}_{2}$ 
 and $|\lambda_4|$ takes $l$ if $-\frac{l-l_2+\delta}{2}=\sigma^{\mathrm{e}}_{1}-\sigma^{\mathrm{o}}_{1}$ and $\sigma^{\mathrm{e}}_{2}-\sigma^{\mathrm{o}}_{2}=\frac{l_2}{2}$.
\end{proof}

%%%%%%%%%%%%%%%%%%%%%%%%%%%%%%%%%%%%%%%%%%%%%%%%%%%%%%%%%%%%%%%%%%%%%%%%%%%%%%%%%%%%%%%%%
%%%%%%%%%%%%%%%%%%%%%%%%%%%%%%%%%%%%%%%%%%%%%%%%%%%%%%%%%%%%%%%%%%%%%%%%%%%%%%%%%%%%%%%%%

\section{Main results}

%%%%%%%%%%%%%%%%%%%%%%%%%%%%%%%%%%%%%%%%%%%%%%%%%%%%%%%%%%%%%%%%%%%%%%%%%%%%%%%%%%%%%%%%%

\subsection{Trivial lower bound of $\tilde{l}$}
 
 We first show that a lower bound of $\tilde{l}$ is obtained 
 by using the trivial estimate of the eigenvalues of Cayley graphs.
 
\begin{lemma}
\label{lem:eigenestimate}
 Assume $|S|\ge 2m$.
 Then, for all $\lambda\in \Lambda(S)$ with $|\lambda|\ne|S|$,
 we have $|\lambda|\le l(S)$. 
\end{lemma}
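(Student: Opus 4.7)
The plan is to split the non-trivial eigenvalues of $X(S)$ given in Lemma~\ref{lem:eigenQ4m} into the one-dimensional character eigenvalues $\lambda_2, \lambda_3, \lambda_4$ and the two-dimensional eigenvalues $\mu_j^{\pm} = z_j \pm |w_j|$ for $1 \leq j \leq m-1$, and then bound each family by $l = l(S)$ separately. For the one-dimensional eigenvalues, I would simply invoke Lemma~\ref{lem:onedimeigen}: the expression $\lambda_2 = -l + 2l_2$ together with $0 \leq l_2 \leq l_1 + l_2 = l$ immediately gives $|\lambda_2| \leq l$, and the explicit ranges for $\lambda_3, \lambda_4$ in both parity cases of $m$ (using $\delta \in \{0,1\}$ in the even case, which forces $l + 2\delta - 4 \leq l - 2$) yield $|\lambda_3|, |\lambda_4| \leq l$ at once.

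The main step is the two-dimensional case, and the key observation is a complementary sum trick. Since $1 \leq j \leq m-1$ implies $j \not\equiv 0 \pmod{2m}$, the character sum identity $\sum_{k=0}^{2m-1} \omega^{jk} = 0$ holds. Setting $S_1^c = \langle x\rangle \setminus S_1$ and $S_2^c = \langle x\rangle y \setminus S_2$, this lets me rewrite
\[
 z_j = -\sum_{x^k \in S_1^c} \omega^{jk}, \qquad
 w_j = -\sum_{x^k y \in S_2^c} \omega^{jk}.
\]
The triangle inequality then gives $|z_j| \leq |S_1^c| = l_1$ and $|w_j| \leq |S_2^c| = l_2$, so $|\mu_j^{\pm}| \leq |z_j| + |w_j| \leq l_1 + l_2 = l$, as required.

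The hypothesis $|S| \geq 2m$ is not actually needed inside either estimate; its role is only to guarantee $l \leq 2m \leq |S|$, so that the bound $|\lambda| \leq l$ is sharper than (or at worst equal to) the a priori bound $|\lambda| \leq |S|$, and hence the condition $|\lambda| \neq |S|$ correctly singles out the trivial eigenvalue $\lambda_1 = |S|$. I do not anticipate any real obstacle: once the character sum identity is applied, the two-dimensional bound is a one-line triangle inequality, and the one-dimensional bounds are already packaged inside Lemma~\ref{lem:onedimeigen}, so the entire argument amounts to assembling those pieces.
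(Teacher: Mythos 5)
Your proof is correct and follows essentially the same route as the paper: the two-dimensional eigenvalues are handled by exactly the same complementary character-sum trick ($z_j=-\sum_{x^{k}\notin S}\omega^{jk}$, $w_j=-\sum_{x^{k}y\notin S}\omega^{jk}$) giving $|z_j|\le l_1$, $|w_j|\le l_2$. The only cosmetic difference is that for $\lambda_2,\lambda_3,\lambda_4$ the paper invokes orthogonality of the characters $\chi_i$ directly rather than reading the bounds off Lemma~\ref{lem:onedimeigen}, but both amount to the same observation; your remark that the hypothesis $|S|\ge 2m$ only serves to make $l(S)\le|S|$ is also consistent with the paper's use of $\min\{|S|,l(S)\}=l(S)$.
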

\begin{proof}
 The claim is clear for the cases $\lambda=\lambda_i$ for $2\le i\le 4$.
 Actually, since $\lambda_i=\sum_{s\in S}\chi_i(s)=-\sum_{s\notin S}\chi_i(s)$, by the orthogonality of characters,
 it holds that $|\lambda_i|\le \min\{|S|,l(S)\}=l(S)$.
 We next consider the cases $\lambda=\mu_j^{\pm}$ for $1\le j\le m-1$.
 Let $|\mu_j|=\max\{|\mu^{+}_j|,|\mu^{-}_{j}|\}$.
 As is the case of the dihedral groups \cite{HiranoKatataYamasaki2},
 we see that  
\begin{equation}
\label{eq:twodimeigenabso}
 |\mu_j|=|z_j|+|w_j|.
\end{equation}
 Hence, since
\begin{align*}
 z_{j}
&=\sum_{x^{k_1}\in S\atop 0\le k_1\le 2m-1}\omega^{jk_1}
=-\sum_{x^{k_1}\notin S\atop 0\le k_1\le 2m-1}\omega^{jk_1}, \\
 w_{j}
&=\sum_{x^{k_2}y\in S\atop 0\le k_2\le 2m-1}\omega^{jk_2}
=-\sum_{x^{k_2}y\notin S\atop 0\le k_2\le 2m-1}\omega^{jk_2},
\end{align*}
 we have $|\mu_j|\le \min\{|S_1|,l_1(S)\}+\min\{|S_2|,l_2(S)\}$.
 Now, it is easy to see that the right-hand side of the inequality is bounded above by $l(S)$.
\end{proof}

\begin{proposition}
 Let $l_0=\Gauss{4\sqrt{m}}-2$.
 Then, we have $\tilde{l}\ge l_0$.
\end{proposition}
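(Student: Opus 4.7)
The plan is to reduce the claim directly to Lemma~\ref{lem:eigenestimate}. By definition of $\tilde{l}$, it suffices to show that for every $S\in\cS$ with $1\le l(S)\le l_0$, the Cayley graph $X(S)$ is Ramanujan, i.e.\ $\lambda(S)\le 2\sqrt{|S|-1}$.

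First I would check that the hypothesis $|S|\ge 2m$ of Lemma~\ref{lem:eigenestimate} is automatic in this range. Writing $l=l(S)$, we have $|S|=4m-l\ge 4m-l_0\ge 4m-4\sqrt{m}+2$, and the inequality $4m-4\sqrt{m}+2\ge 2m$ is equivalent to $(\sqrt{m}-1)^2\ge 1-1$, which is trivially valid for all $m\ge 1$. Hence Lemma~\ref{lem:eigenestimate} applies and yields
\[
 \lambda(S)\le l(S)=l
\]
for every non-trivial eigenvalue.

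The verification then amounts to the elementary inequality $l\le 2\sqrt{4m-l-1}$, or equivalently (after squaring and rearranging) $(l+2)^2\le 16m$. Since $l$ is an integer bounded by $l_0=\Gauss{4\sqrt{m}}-2$, we have $l+2\le \Gauss{4\sqrt{m}}\le 4\sqrt{m}$, giving $(l+2)^2\le 16m$ as required. Combining this with the bound $\lambda(S)\le l$ from Lemma~\ref{lem:eigenestimate} gives $\lambda(S)\le 2\sqrt{|S|-1}=\RB(S)$, so $X(S)$ is Ramanujan.

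I do not anticipate a genuine obstacle: the whole argument is algebraic manipulation of the Ramanujan inequality against the trivial eigenvalue bound $|\lambda|\le l(S)$, and the only tiny care needed is to confirm the hypothesis $|S|\ge 2m$ of Lemma~\ref{lem:eigenestimate} and to manage the floor function properly when passing from $l\le 4\sqrt{m}-2$ to $l\le \Gauss{4\sqrt{m}}-2$.
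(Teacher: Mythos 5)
Your proof is correct and follows essentially the same route as the paper: apply the trivial eigenvalue bound $\lambda(S)\le l(S)$ from Lemma~\ref{lem:eigenestimate} and verify that $l\le l_0$ forces $l\le 2\sqrt{4m-l-1}$, which is exactly the paper's computation (the paper states the equivalence $l\le 2\sqrt{4m-l-1}\iff l\le 4\sqrt{m}-2$ where you write it as $(l+2)^2\le 16m$). Your explicit check of the hypothesis $|S|\ge 2m$ matches the remark at the end of the paper's proof.
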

\begin{proof}
 From Lemma~\ref{lem:eigenestimate}, we see that if $l(S)\le\RB(S)=2\sqrt{|S|-1}=2\sqrt{(4m-l(S))-1}$,
 then $X(S)$ is Ramanujan.
 Now one sees that this is equivalent to $l(S)\le 4\sqrt{m}-2$ and hence obtain the desired result. 
 Remark that $l(S)\le 4\sqrt{m}-2$ implies that $l(S)\le 2m$, that is, $|S|\ge 2m$ for all $m\ge 1$.
\end{proof}

 We call $l_0$ a trivial bound of $\tilde{l}$.
 Using Lemma~\ref{lem:onedimeigen}, 
 we can easily determine the bound $\tilde{l}$ in the case of $\cS=\cS_{Q_{4m}}$.

\begin{theorem}
\label{thm:allcases}
 We have $\tilde{l}=l_0$.
\end{theorem}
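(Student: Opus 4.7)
The plan is to complement the lower bound $\tilde{l}\ge l_0$ just proved by establishing the matching upper bound $\tilde{l}\le l_0$; that is, to exhibit a single $S\in\cS$ with $l(S)\le l_0+1$ whose Cayley graph fails to be Ramanujan.

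The construction makes the trivial estimate $|\lambda|\le l(S)$ of Lemma~\ref{lem:eigenestimate} sharp at the linear character $\chi_2$. Indeed, by Lemma~\ref{lem:eigenQ4m} one has $\lambda_2(S)=|S_1|-|S_2|=2l_2(S)-l(S)$, so choosing $S$ with $l_2(S)=0$ (equivalently, $S\supseteq\langle x\rangle y$) forces $|\lambda_2(S)|=l(S)$. Concretely, I would put $l:=l_0+1$, take $S_2=\langle x\rangle y$, and pick any symmetric $S_1\subseteq\langle x\rangle\setminus\{1\}$ with $|S_1|=2m-l$ and the parity demanded by \eqref{for:Cayleysubset}; such an $S_1$ exists for every $m\ge 2$, since then $l\le 2m$.

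It then remains to check that $|\lambda_2(S)|=l$ genuinely exceeds $\RB(S)=2\sqrt{4m-l-1}$. Squaring the target inequality $l>2\sqrt{4m-l-1}$ reduces it to $(l+2)^2>16m$, i.e., $l>4\sqrt{m}-2$, and this is essentially built into the definition of $l_0$: since $l=\Gauss{4\sqrt{m}}-1>4\sqrt{m}-2$ for every $m\ge 1$, the bound is violated and $\lambda(S)\ge |\lambda_2(S)|>\RB(S)$, so $X(S)$ is not Ramanujan and $\tilde{l}\le l_0$.

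The only thing that could go wrong is that $\lambda_2$ fails to be \emph{non-trivial}: if $|\lambda_2|=|S|$ then $\lambda_2=-|S|$ is the bipartite trivial eigenvalue and should be excluded from $\lambda(S)$. This coincidence $l=2m$ forces $\Gauss{4\sqrt{m}}=2m+1$, and a short arithmetic check (solving $(2m+1)^2\le 16m<(2m+2)^2$) shows the only positive integer solution is $m=2$. Hence the main obstacle reduces to treating this single exceptional case (together with the degenerate case $m=1$, where $\max\cL=l_0$ holds automatically); both will be handled by direct inspection of $\Lambda(S)$ using the formulas in Lemma~\ref{lem:eigenQ4m} for one appropriately chosen $S$ of covalency at most $l_0+1$.
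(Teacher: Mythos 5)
Your construction is exactly the paper's: take $S\in\cS_{l_0+1,0}$ (i.e.\ $S_2=\langle x\rangle y$), observe $\lambda_2=-l$ so $|\lambda_2|=l_0+1$, and check $l_0+1>4\sqrt{m}-2$ so that the Ramanujan bound is violated. The two extra verifications you add (existence of such an $S$, and non-triviality of the eigenvalue $\lambda_2$) are precisely the points the paper's one-line proof leaves implicit, and your reduction of the second issue to the single case $m=2$ via $(2m+1)^2\le 16m<(2m+2)^2$ is correct.

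The genuine problem is the deferred case $m=2$, which cannot be ``handled by direct inspection'' in the way you promise: for $Q_8$ one has $l_0=3$, and \emph{every} Cayley subset of covalency $4$ is Ramanujan, so no witness exists and in fact $\tilde{l}=4=l_0+1$ there. Indeed $\cS_4$ consists only of $S=\langle x\rangle y$ and $S=\{x,x^3\}\sqcup S_2$ with $S_2\in\{\{y,x^2y\},\{xy,x^3y\}\}$; in each case $S$ is contained in a single coset of an index-two subgroup (the kernel of $\chi_2$, $\chi_4$ or $\chi_3$ respectively), the graph is $K_{4,4}$, the eigenvalue $-4$ is excluded from $\lambda(S)$ by definition, and all remaining nontrivial eigenvalues vanish. (The smaller covalencies $l\le 3$ are also all Ramanujan, and $\cS_l=\emptyset$ for $l\ge 5$ since the candidate sets fail to generate.) So the statement as written is false at $m=2$; this is a defect of the theorem itself, which your more careful bookkeeping exposes and which the paper's proof silently shares, since it applies the bound $|\lambda_2|=l_0+1$ without noticing that for $m=2$ this eigenvalue equals $-|S|$ and is therefore discarded. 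For $m=1$ your observation that $\max\cL=l_0$ settles it, and for all $m\ge 3$ your argument is complete; the correct fix is to restrict the theorem to $m\ne 2$ rather than to hunt for a nonexistent counterexample in $Q_8$.
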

\begin{proof}
 Take any $S\in\cS_{l_0+1}$ with $l_2(S)=0$, that is, $S\in\cS_{l_0+1,0}$. 
 Then, from Lemma~\ref{lem:onedimeigen}, 
 we have $|\lambda_2|=l_0+1$ and hence, by the definition of $l_0$, $|\lambda_2|>\RB(S)$.
 This means that $X(S)$ is not Ramanujan.
\end{proof}

%%%%%%%%%%%%%%%%%%%%%%%%%%%%%%%%%%%%%%%%%%%%%%%%%%%%%%%%%%%%%%%%%%%%%%%%%%%%%%%%%%%%%%%%%

\subsection{A modification}

 From Theorem~\ref{thm:allcases},
 in the case of $\cS=\cS_{Q_{4m}}$,
 we may not expect a connection between our problem on Ramanujan graphs and a problem on analytic number theory,
 as our previous studies in the cases of the cyclic and dihedral groups \cite{HiranoKatataYamasaki1,HiranoKatataYamasaki2}.
 So, we next take another set of Cayley subsets of $Q_{4m}$, that is,
\[
 \cS'=\{S\in\cS_{Q_{4m}}\,|\,l_2(S)\ne 0\}.
\]
 Notice that $l_2(S)\ne 0$ is equivalent to $S_2\ne \langle x\rangle y$.
 This means that the setting on $\cS'$ is reasonable in the sense that
 we do not consider the extreme case $S_2=\langle x\rangle y$.
 Furthermore, put $\cL'=\{l(S)\,|\,S\in\cS'\}$ and $\cS'_l=\cS_l\cap\cS'$.
 Now our new purpose is to determine the bound 
\[
 \tilde{l}'
=\max\{l\in\cL'\,|\,\text{$X(S)$ is Ramanujan for all $S\in \cS'_k$ ($1\le k\le l$)}\}.
\]
 It is clear that
\begin{equation}
\label{eq:trvialld}
 \tilde{l}'\ge l_0.
\end{equation}
 Moreover, it holds that 

\begin{theorem}
 When $m$ is even, we have $\tilde{l}'=l_0$.
\end{theorem}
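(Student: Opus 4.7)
\medskip\noindent
\textbf{Proof sketch.}
The inequality $\tilde{l}'\geq l_0$ is supplied by \eqref{eq:trvialld}, so the plan is to produce a single Cayley subset $S\in\cS'_{l_0+1}$ whose graph is not Ramanujan. The approach parallels the proof of Theorem~\ref{thm:allcases}, where the counterexample was obtained by forcing $|\lambda_2(S)|=l_0+1$ through the choice $l_2(S)=0$, a choice that is precisely what is now excluded from $\cS'$. The point is that, for even $m$, Lemma~\ref{lem:onedimeigen}(2) identifies configurations attaining the maximum $|\lambda_3(S)|=l$ in which $l_2$ may be any admissible positive even integer, so an analogous obstruction survives inside $\cS'$.

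Concretely, I would set $l=l_0+1$ and take $l_2$ to be the smallest positive even integer for which the ensuing $\sigma$-values lie in the admissible ranges (typically $l_2=2$). Writing $l_1=l-l_2$ and $\delta\equiv l_1\pmod{2}$, I would put
\[
 (\sigma^{\mathrm{e}}_1,\sigma^{\mathrm{o}}_1)
=\Bigl(\tfrac{m}{2}-\tfrac{l-l_2+\delta}{2},\,\tfrac{m}{2}\Bigr),\qquad
 (\sigma^{\mathrm{e}}_2,\sigma^{\mathrm{o}}_2)
=\Bigl(\tfrac{m}{2}-\tfrac{l_2}{2},\,\tfrac{m}{2}\Bigr),
\]
as dictated by Lemma~\ref{lem:onedimeigen}(2), and then assemble $S=S_1\sqcup S_2$ via \eqref{for:Cayleysubset}. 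Symmetry of $S$ is automatic from the formula, the covalency parameters are $(l_1,l_2)$ as prescribed (so $S\in\cS'_{l_0+1}$ because $l_2>0$), and since $\sigma^{\mathrm{o}}_1,\sigma^{\mathrm{o}}_2\geq 1$ one has $x\in S_1$ and $xy\in S_2$, which together generate $Q_{4m}$ (as $y=x^{-1}(xy)$).

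To conclude, Lemma~\ref{lem:onedimeigen}(2) guarantees $|\lambda_3(S)|=l=l_0+1$. Comparing this with $\RB(S)=2\sqrt{|S|-1}=2\sqrt{4m-l-1}$, the inequality $l>\RB(S)$ is equivalent to $(l+2)^2>16m$, which holds because $l+2=\lfloor 4\sqrt{m}\rfloor+1>4\sqrt{m}$. Hence $X(S)$ is not Ramanujan and $\tilde{l}'\leq l_0$, completing the proof. The main obstacle I foresee is not conceptual but pure book-keeping: one must verify that $\sigma^{\mathrm{e}}_1\geq 0$ together with $0<l_2<2m$ are simultaneously satisfiable. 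For all sufficiently large even $m$, the naive $l_2=2$ works directly; a handful of small cases (for instance, $m=4$ forces $l_2=4$) require the prescription to be shifted, but in every case an admissible choice exists and the construction goes through.
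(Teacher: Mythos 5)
Your proof is correct and follows essentially the same route as the paper's own (one-line) argument: the paper likewise combines the lower bound \eqref{eq:trvialld} with Lemma~\ref{lem:onedimeigen}(2) to produce some $S\in\cS'_{l_0+1}$ with $l_2(S)\neq 0$ and $|\lambda_3|=l_0+1>\RB(S)$. Your write-up simply makes explicit the choice of $(l_1,l_2)$, the admissibility of the $\sigma$-values, and the comparison $(l_0+3)^2>16m$ that the paper leaves implicit.
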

\begin{proof}
 From Lemma~\ref{lem:onedimeigen} (2),
 we can find $S\in\cS'_{l_0+1}$ with $l_2(S)\ne 0$ satisfying $|\lambda_3|=l_0+1>\RB(S)$ (or $|\lambda_4|=l_0+1>\RB(S)$).
 This immediately shows that $X(S)$ is not Ramanujan.  
\end{proof}

 From this theorem, 
 we may assume in what follows that $m$ is odd.
 Remark that, in this case, from Lemma~\ref{lem:onedimeigen} again, 
 we have $|\lambda_i|<l$ for $2\le i \le 4$ for any $l\in\cL'$ and $S\in\cS'_l$.
 
%%%%%%%%%%%%%%%%%%%%%%%%%%%%%%%%%%%%%%%%%%%%%%%%%%%%%%%%%%%%%%%%%%%%%%%%%%%%%%%%%%%%%%%%%

\subsection{An upper bound of $\tilde{l}'$}

 As is the case of $\cS$,
 it is convenient to decompose $\cS'$ as follows;
\begin{equation}
 \cS'
=\bigsqcup_{l\in\cL'}\cS'_l
=\bigsqcup_{l\in\cL'}\bigsqcup_{(l_1,l_2)\in\cL'_{l}}\cS'_{l_1,l_2},
\end{equation} 
 where 
\[
 \cL'_{l}
=\left\{(l_1,l_2)\in\mathbb{Z}^2\,\left|\,
\begin{array}{l}
 0<l_1\le 2m, \ l_1\equiv l \!\!\! \pmod{2}, \\
 0<l_2<2m, \ l_2\equiv 0 \!\!\! \pmod{2},
\end{array}
 \ l_1+l_2=l
\right.
\right\}
\]
 and $\cS'_{l_1,l_2}=\cS_{l_1,l_2}\cap \cS'$ for $l\in\cL'$ and $(l_1,l_2)\in\cL'_l$.

 The aim of this subsection is to show the following result.
 
\begin{proposition}
\label{prop:l0l0+1}
 For $m\ge 65$, we have $\tilde{l}'=l_0$ or $\tilde{l}'=l_0+1$.
\end{proposition}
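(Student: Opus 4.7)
Since \eqref{eq:trvialld} already establishes $\tilde{l}'\ge l_0$, the remaining task is to show $\tilde{l}'\le l_0+1$, equivalently to exhibit a single $S\in\cS'_{l_0+2}$ whose Cayley graph $X(S)$ is not Ramanujan. It has already been noted that for odd $m$ one has $|\lambda_i|<l$ for $i=2,3,4$, and Lemma~\ref{lem:onedimeigen}\,(1) even gives $|\lambda_i|\le l_0<\RB(S)=2\sqrt{4m-l_0-3}$; therefore any potential violation must come from a two-dimensional eigenvalue, and by \eqref{eq:twodimeigenabso} it suffices to construct $S$ with $|z_j|+|w_j|>\RB(S)$ for some $j$.

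The plan is to take $j=2$ (the smallest even index, for which $w_j$ does not vanish) and to choose the complement $T=Q_{4m}\setminus S$ as a centred interval in each of the two cosets of $\langle x\rangle$ in $Q_{4m}$. Explicitly, for nonnegative integers $s,N$ with $|T_1|+|T_2|=l_0+2$, I would set
\[
T_1=\{1\}\cup\bigl(\{x^m\}\text{ if }l_0\text{ is even}\bigr)\cup\{x^{\pm a}:1\le a\le s\},\quad T_2=\{x^k y,\,x^{m+k}y:0\le k\le N-1\}.
\]
The Lagrange condition for maximizing $\sin\alpha+2\sin\beta$ under a linear constraint $\alpha+2\beta=\text{const}$ is $\alpha=\beta$, which suggests the choice $2s+1\approx N$, i.e.\ $s\approx(l_0-2)/6$ and $N\approx l_0/3$. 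Then $S=Q_{4m}\setminus(T_1\cup T_2)\in\cS'_{l_0+2}$, and Lemma~\ref{lem:eigenQ4m} together with the Dirichlet-kernel identity $\sum_{a=-s}^{s}e^{2\pi ia/m}=\sin((2s+1)\pi/m)/\sin(\pi/m)$ gives the closed forms
\[
|z_2|=\epsilon+\frac{\sin((2s+1)\pi/m)}{\sin(\pi/m)},\qquad |w_2|=\frac{2\sin(N\pi/m)}{\sin(\pi/m)},
\]
where $\epsilon=1$ if $l_0$ is even and $\epsilon=0$ if $l_0$ is odd.

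With the constraint $(2s+1)+2N=l_0+1+\epsilon$, Taylor expanding $\sin t=t-t^3/6+O(t^5)$ yields $|\mu_2^{\pm}|=|z_2|+|w_2|=l_0+2-O(l_0^3/m^2)$. On the Ramanujan bound side, the algebraic identity $(l_0+2)^2-\RB(S)^2=(l_0+4)^2-16m$, combined with $l_0\ge 4\sqrt{m}-3$, gives $(l_0+2)^2-\RB(S)^2\ge 8\sqrt{m}+1$. Combining the two estimates shows that $|\mu_2^{\pm}|^2-\RB(S)^2>0$ once $m$ is sufficiently large, so $|\mu_2^{\pm}|>\RB(S)$, $X(S)$ is not Ramanujan, and $\tilde{l}'\le l_0+1$ as required.

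The main obstacle is the extreme tightness of the final inequality: both $|\mu_2^{\pm}|$ and $\RB(S)$ equal $l_0+2$ to leading order in $m$, so a purely asymptotic Taylor bound only establishes the statement for $m$ in the hundreds. Reaching the advertised threshold $m\ge 65$ requires (a) keeping the cubic Taylor term for $\sin$ with its exact constant rather than a loose $O$-bound, (b) carefully optimising over the admissible integer pairs $(s,N)$ so as to match the continuous optimum within the parity restrictions on $l_1$ and $l_2$, and (c) performing a direct numerical check of the finitely many small odd $m$ where the analytic bound is still too weak, consistent with the paper's acknowledged use of Mathematica.
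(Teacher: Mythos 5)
Your proposal follows essentially the same route as the paper: take the complement of $S$ to be intervals in the two cosets of $\langle x\rangle$ split roughly $1:2$ (the paper's Lemma~\ref{lem:maxmu2} pins down the exact integer split modulo $6$), evaluate $|\mu_2|=|z_2|+|w_2|$ by Dirichlet kernels, show $|\mu_2|-\RB=1-O(m^{-1/2})$, and finish the small odd $m$ down to $65$ numerically, exactly as the paper does (it verifies the inequality analytically for $m\ge 105$ and by computation for $65\le m\le 103$). The argument is correct apart from a harmless bookkeeping slip — with your $T_1,T_2$ the constraint is $(2s+1)+2N=l_0+2-\epsilon$ rather than $l_0+1+\epsilon$ — which does not affect the leading-order comparison.
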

 
 Let $l\in\cL'$.
 To prove Proposition~\ref{prop:l0l0+1},
 we first construct $S^{(l_1,l_2)}\in\cS'_{l_1,l_2}$ for each $(l_1,l_2)\in\cL'_{l}$
 such that $X(S^{(l_1,l_2)})$ may have the maximal eigenvalue (in the sense of absolute value) among $X(S)$ with $S\in\cS'_{l_1,l_2}$.
 Let $(l_1,l_2)\in\cL'_{l}$.
 We define $S^{(l_1,l_2)}=S^{(l_1)}_1\sqcup S^{(l_2)}_{2}\in \cS'_{l_1,l_2}$ by  
\begin{align*}
 S^{(l_1)}_1
&=\langle x \rangle\setminus \{1,x^{\pm 1},\ldots,x^{\pm \frac{l_1-2+\delta}{2}}\}\cup\{x^{m}\}^{1-\delta}, \\
 S^{(l_2)}_2
&=\langle x \rangle y\setminus \{y,xy,\ldots,x^{\frac{l_2}{2}-1}y,x^{m}y,x^{m+1}y,\ldots,x^{m+\frac{l_2}{2}-1}y\},
\end{align*}
 where $\delta=1$ if $l$ is odd and $0$ otherwise.
 We respectively write $z_j$, $w_j$ and $|\mu_j|$ as $z^{(l_1,l_2)}_j$, $w^{(l_1,l_2)}_j$ and $|\mu^{(l_1,l_2)}_j|$ when $S=S^{(l_1,l_2)}$.
 Recall that $w^{(l_1,l_2)}_j=0$ when $j$ is odd.
 On the other hand when $j$ is even, we have  
\begin{align*}
 w^{(l_1,l_2)}_j
=-2\sum_{k_2=0}^{\frac{l_2}{2}-1}e^{\frac{2\pi ijk_2}{2m}}
=-2e^{\frac{\pi ij(l_2-2)}{4m}}\frac{\sin{\frac{\pi jl_2}{4m}}}{\sin{\frac{\pi j}{2m}}}.
\end{align*}
 Moreover, $z^{(l_1,l_2)}_j$ is calculated as 
\begin{align*}
 z^{(l_1,l_2)}_j
&=-\left(\sum_{k_1=-\frac{l_1-2+\delta}{2}}^{\frac{l_1-2+\delta}{2}}e^{\frac{2\pi ijk_1}{2m}}+(1-\delta)(-1)^j\right)\\
&=-\left(\frac{\sin{\frac{\pi j(l_1-1+\delta)}{2m}}}{\sin{\frac{\pi j}{2m}}}+(1-\delta)(-1)^j\right).
\end{align*}
 Hence we have 
\begin{equation}
\label{eq:extream}
 |\mu^{(l_1,l_2)}_j|
=
\left(
\frac{\sin{\frac{\pi j(l_1-1+\delta)}{2m}}}{\sin{\frac{\pi j}{2m}}}+(1-\delta)(-1)^j\right)
+\delta_j\left(2\frac{\sin{\frac{\pi jl_2}{4m}}}{\sin{\frac{\pi j}{2m}}}\right),
\end{equation}
 where $\delta_j=1$ if $j$ is even and $0$ otherwise.
 We now focus on the case of $j=2$.

\begin{lemma}
\label{lem:maxmu2}
 Let $l\in\cL'$.
 When $l\equiv r$ \!\!\!\! $\pmod{6}$ for $0\le r\le 5$, we have 
\begin{equation}
\label{for:m2max}
 \max\left\{\left.|\mu_2^{(l_1,l_2)}|\,\right|\,(l_1,l_2)\in\mathcal{L}'_{l}\right\}
=|\mu_2^{(\check{l}_1,\check{l}_2)}|,
\end{equation}
 where $(\check{l}_1,\check{l}_2)=(\frac{l+a_r}{3},\frac{2l-a_r}{3})\in\mathcal{L}'_l$ with
\[ 
 a_1=2, \ \ a_3=0, \ \ a_5=-2, \ \ a_0=0, \ \ a_{2}=4, \ \ a_4=2.
\]
\end{lemma}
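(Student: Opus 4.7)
I would begin by specialising \eqref{eq:extream} to $j = 2$: since $\delta_2 = 1$ and $(-1)^2 = 1$, substituting $l_1 = l - l_2$ turns the quantity into
\[
 h(l_2) := \bigl|\mu_2^{(l-l_2,\,l_2)}\bigr|
=\frac{\sin\frac{\pi(l - l_2 - 1 + \delta)}{m}}{\sin\frac{\pi}{m}} + (1 - \delta) + \frac{2 \sin\frac{\pi l_2}{2m}}{\sin\frac{\pi}{m}},
\]
where $\delta = 1$ if $l$ is odd and $\delta = 0$ otherwise. The problem is therefore to maximise $h$ over the finitely many even integers $l_2$ with $(l - l_2, l_2)\in\mathcal{L}'_l$. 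In the regime $l\ll m$ that is relevant for Proposition~\ref{prop:l0l0+1}, both sine arguments lie in $(0,\pi)$, so the expression above is the actual value $|\mu_2|$ and every sine that appears is positive.

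I would then treat $h$ as a smooth function of a real variable on $(0, l)$. A direct computation gives
\[
 h'(l_2) = \frac{\pi/m}{\sin(\pi/m)}\left(\cos\frac{\pi l_2}{2m} - \cos\frac{\pi(l - l_2 - 1 + \delta)}{m}\right),
\]
and because the two cosine arguments both lie in $(0,\pi)$ the only way for them to be equal is for the arguments themselves to coincide, yielding the unique interior critical point
\[
 l_2^{*} = \frac{2(l - 1 + \delta)}{3}.
\]
Each summand of $h$ is of the form $\text{const}\cdot\sin(\text{affine in }l_2)$ with the argument staying in $(0,\pi)$, and the second derivative of such a summand is $-b^{2}\sin(\cdot)<0$; hence $h''<0$ throughout the feasible interval, $h$ is strictly concave, and $l_2^{*}$ is its unique global maximum.

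By strict concavity the maximum of $h$ over feasible even integers is attained at the unique even integer $\check{l}_2$ closest to $l_2^{*}$, because for two even integers $a<a+2$ flanking $l_2^{*}$ strict concavity forces the one nearer to $l_2^{*}$ to yield the larger value of $h$. A case analysis on the six residues of $l$ modulo $6$, which via $l\bmod 2$ also fixes $\delta$, then finishes the proof: in the classes $r=3,4$ the value $l_2^{*}$ is already an even integer and one sets $\check{l}_2=l_2^{*}$, while in the remaining four classes $l_2^{*}$ sits at distances $2/3$ and $4/3$ from its two flanking even integers, so $\check{l}_2$ is the nearer one. Rewriting the outcome as $\check{l}_2 = (2l - a_r)/3$, $\check{l}_1 = l - \check{l}_2 = (l + a_r)/3$, one reads off precisely the tabulated values of $a_r$, and the membership $(\check{l}_1,\check{l}_2)\in\mathcal{L}'_l$ (parity and inclusion in the admissible ranges) is automatic from the construction.

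The step I expect to be the main obstacle is making the strict concavity of $h$ genuinely global on the feasible interval, rather than just local near $l_2^{*}$, so that ``closest even integer'' really does beat all other candidates including the endpoints. This comes down to keeping the argument $\pi(l-l_2-1+\delta)/m$ bounded away from $0$ and $\pi$ uniformly in $l_2$, which is automatic once $l$ is of the order of $l_0=\Gauss{4\sqrt{m}}-2$ and $m$ is large enough (for instance $m\ge 65$, the range needed in Proposition~\ref{prop:l0l0+1}); nonetheless this is the only point in the argument where one must do more than a one-line calculation, since the mod-$6$ bookkeeping in the final step is entirely routine once concavity is in hand.
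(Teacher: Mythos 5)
Your setup coincides with the paper's: you differentiate $|\mu_2^{(l-l_2,l_2)}|$ in $l_2$ (your difference of cosines is, via the sum-to-product identity, exactly the paper's product of sines), locate the unique interior critical point $l_2^{*}=\tfrac{2(l-1+\delta)}{3}$, and reduce the maximization to the two even integers flanking $l_2^{*}$. Up to that point the argument is sound, and the monotone-increasing-then-decreasing behaviour does dispose of all other candidates including the endpoints.

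The gap is in the decisive step. You claim that ``for two even integers flanking $l_2^{*}$, strict concavity forces the one nearer to $l_2^{*}$ to yield the larger value of $h$.'' That is false for concave functions in general: concavity does not impose any symmetry about the maximizer. For instance, the concave function $h(x)=x/10$ for $x\le 0$, $h(x)=-10x$ for $x\ge 0$ has its maximum at $0$, yet $h(-1.3)=-0.13$ beats $h(0.7)=-7$ even though $0.7$ is the nearer point. Your $h$ is a sum of two sine waves of \emph{different} frequencies ($\pi/m$ and $\pi/(2m)$), so it is genuinely asymmetric about $l_2^{*}$, and the flanking points sit at the unequal distances $2/3$ and $4/3$; which one wins is precisely the content of the lemma (it is what determines $a_r$ for $r\in\{1,2,5,0\}$), and it cannot be read off from concavity. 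The paper instead compares $|\mu_2^{(l_1^-,l_2^-)}|$ and $|\mu_2^{(l_1^+,l_2^+)}|$ directly in each residue class, finding $>$ for $r=1,2$, $=$ for $r=3,4$ and $<$ for $r=5,0$. To close your gap you must carry out this comparison; it is short, e.g.\ for $r=1$ (so $l=6k+1$, $\delta=1$) one gets
\begin{equation*}
 h(4k)-h(4k+2)=\frac{2\sin\frac{2k\pi}{m}\bigl(1-\cos\frac{\pi}{m}\bigr)}{\sin\frac{\pi}{m}}>0,
\end{equation*}
and analogous identities settle the other residues. Your final table of $a_r$ is correct — the ``nearest even integer'' happens to agree with the true winner in all six classes — but that agreement is a verified coincidence, not a consequence of concavity.
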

\begin{proof}
 It holds that   
\[
 \frac{\partial }{\partial l_2}|\mu^{(l_1,l_2)}_2|
=\frac{\frac{2\pi}{m}}{\sin{\frac{\pi}{m}}}\sin{\frac{\pi (2(l-1+\delta)-l_2)}{4m}}\sin{\frac{\pi (2(l-1+\delta)-3l_2)}{4m}}.
\]
 Hence, noting that $l$, $l_1$ and $l_2$ are small enough rather than $m$,
 we see that, as a continuous function of $l_2$,
 $\frac{\partial }{\partial l_2}|\mu^{(l_1,l_2)}_2|=0$ on $[1,l]$
 if and only if $l_2=\frac{2(l-1+\delta)}{3}$, 
 which means that $|\mu^{(l_1,l_2)}_2|$ is monotone increasing on $[1,\frac{2(l-1+\delta)}{3}]$
 and decreasing on $[\frac{2(l-1+\delta)}{3},l]$.
 Let us find $(l_1^{\pm},l_2^{\pm})\in\cL'_{l}$ such that 
 $l_2^{-}$ is the maximum and $l_2^{+}$ the minimum integer satisfying  
 $l^{-}_2\le \frac{2(l-1+\delta)}{3}\le l_2^{+}$
 (notice that $l_1^{\pm}$ are automatically determined from $l_2^{\pm}$ by $l_1^{\pm}+l_2^{\pm}=l$).
 If we write $l=6k+r$, then one sees that these are respectively given as follows:
\[
\begin{array}{c||c|c|c}
 r & 1 & 3 & 5 \\ 
 \hline
 (l^{-}_1,l^{-}_2) & (2k+1,4k) & (2k+1,4k+2) & (2k+3,4k+2) \\[2pt]
 (l^{+}_1,l^{+}_2) & (2k-1,4k+2) & (2k+1,4k+2) & (2k+1,4k+4) 
\end{array}
\]
\[
\begin{array}{c||c|c|c}
 r & 0 & 2 & 4 \\ 
 \hline
 (l^{-}_1,l^{-}_2) & (2k+2,4k-2) & (2k+2,4k) & (2k+2,4k+2) \\[2pt]
 (l^{+}_1,l^{+}_2) & (2k,4k) & (2k,4k+2) & (2k+2,4k+2)
\end{array}
\]
 Now the result follows from the facts  
 $|\mu^{(l_1^{-},l_2^{-})}_2|>|\mu^{(l_1^{+},l_2^{+})}_2|$ for $r=1,2$,
 $|\mu^{(l_1^{-},l_2^{-})}_2|=|\mu^{(l_1^{+},l_2^{+})}_2|$ for $r=3,4$ and
 $|\mu^{(l_1^{-},l_2^{-})}_2|<|\mu^{(l_1^{+},l_2^{+})}_2|$ for $r=5,0$.
 Namely,
 $(\check{l}_1,\check{l}_2)=(l_1^{-},l_2^{-})$ for $r=1,2$,
 $(l_1^{-},l_2^{-})=(l_1^{+},l_2^{+})$ for $r=3,4$ and 
 $(l_1^{+},l_2^{+})$ for $r=5,0$.
\end{proof}

 Using Lemma~\ref{lem:maxmu2}, 
 we give a proof of Proposition~\ref{prop:l0l0+1}.

\medbreak 

\noindent
{\it Proof of Proposition~\ref{prop:l0l0+1}}. \ 
 It is sufficient to show that 
 there exists $S\in\cL'_{l_0+2}$ such that $X(S)$ is not Ramanujan.
 Actually, let $l_0=\Gauss{4\sqrt{m}}-2\equiv r$ \!\!\!\! $\pmod{6}$ for $0\le r\le 5$. 
 Take $S^{(\check{l}_1,\check{l}_2)}\in\mathcal{S}'_{l_0+2}$
 with $(\check{l}_1,\check{l}_2)=(\frac{l_0+2+a_{r+2}}{3},\frac{2(l_0+2)-a_{r+2}}{3})\in\cL'_{l_0+2}$.
 Here the index of $a_r$ is considered modulo $6$.
 Then, noticing that $4\sqrt{m}-1<l_0+2\le 4\sqrt{m}$, we have 
\begin{align*}
&\ \ \ |\mu^{(\check{l}_1,\check{l}_2)}_2|-\RB(S^{(\check{l}_1,\check{l}_2)})\\
&=\frac{\sin{\frac{\pi(\check{l}_1-1+\delta)}{m}}}{\sin{\frac{\pi}{m}}}+(1-\delta)+2\frac{\sin{\frac{\pi \check{l}_2}{2m}}}{\sin{\frac{\pi}{m}}}-2\sqrt{4m-(\check{l}_1+\check{l}_2)-1}\\
&=\frac{\sin{(\frac{\pi}{m}(\frac{l_0+2+a_{r+2}}{3}-1+\delta)})}{\sin{\frac{\pi}{m}}}+(1-\delta)+2\frac{\sin{(\frac{\pi}{2m}\frac{2(l_0+2)-a_{r+2}}{3}})}{\sin{\frac{\pi}{m}}}\\
&\ \ \ -2\sqrt{4m-(l_0+2)-1}\\
&>\frac{\sin{\frac{\pi}{m}(\frac{4\sqrt{m}-1+a_{r+2}}{3}-1+\delta)}}{\sin{\frac{\pi}{m}}}+(1-\delta)+2\frac{\sin{\frac{\pi}{2m}\frac{2(4\sqrt{m}-1)-a_{r+2}}{3}}}{\sin{\frac{\pi}{m}}}\\
&\ \ \ -2\sqrt{4m-(4\sqrt{m}-1)-1}\\
&=1-\frac{64\pi^2-27}{54}m^{-\frac{1}{2}}+O(m^{-1})
\end{align*} 
 as $m\to\infty$.
 This shows that $|\mu^{(\check{l}_1,\check{l}_2)}_2|>\RB(S^{(\check{l}_1,\check{l}_2)})$ for sufficiently large $m$
 and hence concludes that the corresponding Cayley graph $X(S^{(\check{l}_1,\check{l}_2)})$ is not Ramanujan.
 Actually, one can check that the inequality holds for $m\ge 105$.
 Moreover,
 we can  numerically see that $|\mu^{(\check{l}_1,\check{l}_2)}_2|-\RB(S^{(\check{l}_1,\check{l}_2)})>0$ for $65\le m\le 103$
 (however it does not hold when $m=63$).

%%%%%%%%%%%%%%%%%%%%%%%%%%%%%%%%%%%%%%%%%%%%%%%%%%%%%%%%%%%%%%%%%%%%%%%%%%%%%%%%%%%%%%%%%

\subsection{A characterization of exceptional primes}

 From now on, we concentrate on the case where $m=p$ is odd prime
 (we can perform the similar discussion for general $m$ as in \cite{HiranoKatataYamasaki1}, though it may be complicated).
 We know from Proposition~\ref{prop:l0l0+1} that
 it can be written as $\tilde{l}'=l_0+\varepsilon$ for some $\varepsilon=\varepsilon_p\in\{0,1\}$.
 As is the case of the cyclic and dihedral graphs \cite{HiranoKatataYamasaki1,HiranoKatataYamasaki2},
 we call $p$ exceptional if $\varepsilon=1$ and ordinary otherwise.
 Now our task is to clarify which $p\in\mathbb{P}$ is exceptional.

 For $l\in\cL'$,
 let $\lambda(l)=\max\{\lambda(S)\,|\,S\in\cS'_{l}\}$ and $\RB(l)=2\sqrt{4p-l-1}$,
 which is nothing but the Ramanujan bound of $X(S)$ for $S\in \cS'_{l}$.
 From the definition, $p$ is exceptional if and only if $\lambda(l_0+1)\le \RB(l_0+1)$.

\begin{lemma}
\label{lem:mml1l2}
 Let $l\in\cL'$.
 For $(l_1,l_2)\in\cL'_l$,
 let $ \lambda(l_1,l_2)=\max\{\lambda(S)\,|\,S\in\cS'_{l_1,l_2}\}$.
 Then, we have $\lambda(l_1,l_2)=|\mu_2^{(l_1,l_2)}|$ for sufficiently large $p$.
\end{lemma}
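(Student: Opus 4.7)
The plan is to establish $\lambda(S) \le |\mu_2^{(l_1,l_2)}|$ for every $S \in \cS'_{l_1,l_2}$ when $p$ is sufficiently large; the reverse inequality is automatic since $\mu_2^{(l_1,l_2)}$ occurs in $\Lambda(S^{(l_1,l_2)})$. By Lemma~\ref{lem:eigenQ4m}, the non-trivial eigenvalues of $X(S)$ split into the one-dimensional values $\lambda_i(S)$ ($i=2,3,4$) and the doubly counted values $\mu_j^{\pm}(S)$ from the two-dimensional representations $\varphi_j$ ($1 \le j \le m-1$), with
\[
|\mu_j(S)| := \max\bigl(|\mu_j^{+}(S)|,\,|\mu_j^{-}(S)|\bigr) = |z_j(S)| + |w_j(S)|.
\]
I would control these two families separately.

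For the one-dimensional eigenvalues, Lemma~\ref{lem:onedimeigen}(1) together with $m$ odd and $l_2 \ge 2$ (forced by $(l_1,l_2)\in\cL'_l$) gives $|\lambda_2(S)| = |l - 2l_2| \le l-2$ and $|\lambda_3(S)| = |\lambda_4(S)| \le l - l_2 \le l-2$. Taylor-expanding the sine ratios in \eqref{eq:extream} shows $|\mu_2^{(l_1,l_2)}| = l + o(1)$ as $p \to \infty$ in the admissible range $l \le 4\sqrt{p}$, so $|\mu_2^{(l_1,l_2)}| > l - 2$ for $p$ large enough, dominating each $|\lambda_i(S)|$.

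For the two-dimensional eigenvalues I would first dispose of odd $j$: then $w_j \equiv 0$ so $|\mu_j(S)| = |z_j(S)| \le l_1 < |\mu_2^{(l_1,l_2)}|$ for $p$ large. The interesting case is even $j = 2j'$. Since $m = p$ is prime and $1 \le j' \le (p-1)/2$, the map $k \mapsto j'k \pmod{p}$ is a bijection of $\bZ/p$ that preserves the antipodal structure on $\bar{S}_1 \bmod p$ forced by \eqref{for:Cayleysubset}; acting on the multiset $\bar{S}_1 \bmod p$ it identifies the maximization problem for $|z_{2j'}(S)|$ with the one for $|z_2(S)|$, and analogously on $T_2$ it identifies $\max_S |w_{2j'}(S)|$ with $\max_S |w_2(S)|$. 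The triangle inequality then gives
\[
\max_{S \in \cS'_{l_1,l_2}}|\mu_j(S)| \;\le\; \max_S |z_2(S)| + \max_S |w_2(S)|,
\]
reducing the whole problem to the single index $j = 2$.

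It then remains to identify $\max_S |z_2(S)| + \max_S |w_2(S)|$ with $|z_2^{(l_1,l_2)}| + |w_2^{(l_1,l_2)}| = |\mu_2^{(l_1,l_2)}|$, i.e., to show that $S^{(l_1,l_2)}$ simultaneously attains the maxima of $|z_2|$ and $|w_2|$. I expect this to be the main obstacle. For $|w_2(S)| = 2\bigl|\sum_{k_2 \in \bar{T}_2} e^{2\pi i k_2/p}\bigr|$ the optimization runs over arbitrary $\bar{T}_2 \subset \bZ/p$ of size $l_2/2$ and reduces to a Dirichlet-kernel extremum attained on any consecutive interval, matching $S^{(l_1,l_2)}$. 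For $|z_2(S)| = |(2-\delta) + 2\sum_{k \in T'}\cos(2\pi k/p)|$ the antipodal symmetry of $\bar{S}_1$ in $\bZ/(2p)$ constrains $T'$, and one must show, for $p$ large, that placing the removed pairs consecutively around $0$ beats competing configurations (such as stacking both antipodal copies of small $k$); this comparison is the delicate step, and is where the hypothesis ``$p$ sufficiently large'' is used.
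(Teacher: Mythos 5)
Your overall strategy coincides with the paper's: kill the one\-/dimensional eigenvalues and the odd\-/$j$ eigenvalues by crude bounds against the asymptotic $|\mu_2^{(l_1,l_2)}|=l+o(1)$, and reduce the even\-/$j$ case to $j=2$ via the bijection $k\mapsto j'k \pmod p$. Your treatment of odd $j$ (the trivial bound $|\mu_j|=|z_j|\le l_1<|\mu_2^{(l_1,l_2)}|$) is actually cleaner than the paper's, which instead bounds $|z_j|$ by $|\mu_1^{(l_1,l_2)}|$ and then compares $|\mu_1^{(l_1,l_2)}|$ with $|\mu_2^{(l_1,l_2)}|$ asymptotically; both work. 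The divergence is at the step you explicitly flag as ``the main obstacle'', which the paper's own proof dismisses with the phrase ``because $p$ is prime''.

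That step is a genuine gap, and it cannot be closed in the form you (or the paper) need: one must show that $S^{(l_1,l_2)}$ \emph{simultaneously} maximizes $|z_2|$ and $|w_2|$ over $\cS'_{l_1,l_2}$. The $|w_2|$ half is fine (an arc extremum over arbitrary $(l_2/2)$-subsets of $\mathbb{Z}/p$). But for $|z_2|$ the competing configuration you name --- stacking both antipodal lifts of a small residue --- really does win. Write $R_1=\langle x\rangle\setminus S_1$, a symmetric $l_1$-subset of $\mathbb{Z}/2p$ containing $0$; each inverse pair $\{x^{k},x^{2p-k}\}$ contributes $2\cos(2\pi k/p)$ to $-z_2$, and since $\cos(2\pi k/p)=\cos(2\pi(p-k)/p)$ each cosine value near $1$ is available \emph{twice}, from $\{x^{k},x^{2p-k}\}$ and from $\{x^{p-k},x^{p+k}\}$. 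For instance, with $l_1=6$, $\delta=0$, the set $R_1'=\{1,x^{\pm1},x^{p\pm1},x^{p}\}$ gives $|z_2|=2+4\cos(2\pi/p)$, strictly larger than the value $2+2\cos(2\pi/p)+2\cos(4\pi/p)$ of the consecutive set $\{1,x^{\pm1},x^{\pm2},x^{p}\}$; pairing $R_1'$ with the arc choice of $S_2$ yields a legitimate $S\in\cS'_{6,l_2}$ (it still generates $Q_{4p}$) with $|\mu_2(S)|>|\mu_2^{(6,l_2)}|$. The same happens for every $l_1\ge 5$, so $\max_S|z_2(S)|>|z_2^{(l_1,l_2)}|$ and the claimed equality $\lambda(l_1,l_2)=|\mu_2^{(l_1,l_2)}|$ fails. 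Any repair must either replace $S^{(l_1,l_2)}$ by the antipodally doubled extremizer or settle for $\lambda(l_1,l_2)=|\mu_2^{(l_1,l_2)}|+O(l_1^{3}/p^{2})$; note that this error is of order $p^{-1/2}$, the same order as the $k^{-1}$ term that decides exceptionality in Theorem~\ref{thm:mainD}, so it cannot simply be absorbed. In short, you correctly isolated the decisive comparison, but it goes the other way, and your proof (like the paper's) does not go through as written.
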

\begin{proof}
 Take any $S\in\mathcal{S}'_{l_1,l_2}$.
 When $j$ is odd, since $w_j=0$, we have 
 $|\mu_j|=|z_j|\le |z^{(l_1,l_2)}_1|=|\mu_1^{(l_1,l_2)}|$ because $p$ is prime.
 On the other hand when $j$ is even,
 since $jk$ is always even modulo $2p$ for any $k$,
 it holds that $|\mu_j|\le |\mu_2^{(l_1,l_2)}|$ by the same reason as above.
 Moreover, since   
\begin{align*}
 |\mu_1^{(l_1,l_2)}|
&=\frac{\sin{\frac{\pi (l_1-1+\delta)}{2p}}}{\sin{\frac{\pi }{2p}}}-(1-\delta)
=(-2+2\delta+l_1)+O(p^{-2}),\\
 |\mu_2^{(l_1,l_2)}|
&=
\frac{\sin{\frac{\pi (l_1-1+\delta)}{p}}}{\sin{\frac{\pi }{p}}}+(1-\delta)
+2\frac{\sin{\frac{\pi l_2}{2p}}}{\sin{\frac{\pi}{p}}}
=(l_1+l_2)+O(p^{-2}),
\end{align*}
 we see that
 $|\mu_2^{(l_1,l_2)}|-|\mu_1^{(l_1,l_2)}|=l_2+2-2\delta+O(p^{-2})$ as $p\to\infty$.
 Hence, under the condition $l_2>0$,
 we have $|\mu_2^{(l_1,l_2)}|>|\mu_1^{(l_1,l_2)}|$ for sufficiently large $p$.
 Combining this together with the fact $\max\{|\lambda_i|\,|\,2\le i\le 4,\ S\in\cS'_{l_1,l_2}\}=l_1+l_2-2<|\mu_2^{(l_1,l_2)}|$  for sufficiently large $p$,
 one obtains the claim. 
\end{proof}

\begin{proposition}
\label{prop:lotrue}
 Let $p\ge 67$.
 When $l_0\equiv r$ \!\!\!\! $\pmod{6}$ for $0\le r\le 5$,
 we have 
\[
 \lambda(l_0+1)=|\mu_2^{(\check{l}_1,\check{l}_2)}|,
\]
 where $(\check{l}_1,\check{l}_2)=(\frac{l_0+1+a_{r+1}}{3},\frac{2(l_0+1)-a_{r+1}}{3})\in\mathcal{L}'_{l_0+1}$.
\end{proposition}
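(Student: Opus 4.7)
The plan is to combine Lemmas~\ref{lem:maxmu2} and~\ref{lem:mml1l2}. By definition,
\[
\lambda(l_0+1)=\max_{(l_1,l_2)\in\cL'_{l_0+1}}\lambda(l_1,l_2).
\]
First I would fix an arbitrary admissible pair $(l_1,l_2)\in\cL'_{l_0+1}$ and invoke Lemma~\ref{lem:mml1l2}, which identifies the inner maximum as $\lambda(l_1,l_2)=|\mu^{(l_1,l_2)}_2|$, provided $p$ is large enough. Granting this, the proposition reduces to
\[
\max_{(l_1,l_2)\in\cL'_{l_0+1}}|\mu^{(l_1,l_2)}_2|=|\mu^{(\check{l}_1,\check{l}_2)}_2|,
\]
which is precisely Lemma~\ref{lem:maxmu2} applied with $l=l_0+1$. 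Since $l_0\equiv r\pmod{6}$ yields $l_0+1\equiv r+1\pmod{6}$, the optimal pair carries the shifted index $a_{r+1}$, producing
\[
(\check{l}_1,\check{l}_2)=\bigl(\tfrac{l_0+1+a_{r+1}}{3},\tfrac{2(l_0+1)-a_{r+1}}{3}\bigr),
\]
exactly as asserted.

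The main obstacle is upgrading the qualitative phrase \emph{sufficiently large $p$} used in the proof of Lemma~\ref{lem:mml1l2} into the explicit threshold $p\ge 67$. I would make effective the three comparisons appearing in that proof: (i) $|\mu^{(l_1,l_2)}_2|>|\mu^{(l_1,l_2)}_1|$, whose gap was shown to be $l_2+2-2\delta+O(p^{-2})$, where the leading term stays positive thanks to $l_2\ge 2$ throughout $\cS'$; (ii) $|\mu_j|\le|\mu^{(l_1,l_2)}_2|$ for every even $j\ge 4$ and $|\mu_j|\le|\mu^{(l_1,l_2)}_1|$ for every odd $j\ge 3$, which rely only on the fact that $p$ is prime; and (iii) $|\lambda_i|\le l_1+l_2-2<|\mu^{(l_1,l_2)}_2|$ for $2\le i\le 4$, which for odd $m$ is already furnished by Lemma~\ref{lem:onedimeigen}\,(1).

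Using the closed form \eqref{eq:extream} together with the bound $l_0+1=O(\sqrt{p})$ and elementary estimates such as $\sin x\ge\tfrac{2}{\pi}x$ on $[0,\pi/2]$, one can make the $O(p^{-2})$ remainder in (i) fully quantitative and check that (i), (ii), (iii) all hold for $p\ge 67$. The finitely many remaining small primes can be handled by direct numerical verification, in the same spirit as the numerical step at the end of the proof of Proposition~\ref{prop:l0l0+1}. The delicate point is (i): because the gap $l_2+2-2\delta$ can be as small as $2$ (when $l_2=2$, $\delta=1$), and the error term involves $l_1$ which grows like $\sqrt{p}$, the constants implicit in the $O(p^{-2})$ must be tracked with care in order not to lose the threshold $p\ge 67$.
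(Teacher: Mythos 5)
Your proposal is correct and follows essentially the same route as the paper: the proof there simply combines Lemma~\ref{lem:maxmu2} and Lemma~\ref{lem:mml1l2}, with the shift $l_0+1\equiv r+1\pmod 6$ accounting for the index $a_{r+1}$, and then remarks that the key inequality $|\mu_2^{(\check{l}_1,\check{l}_2)}|>|\mu_1^{(\check{l}_1,\check{l}_2)}|$ can be checked to hold for all $p\ge 67$. Your identification of making the ``sufficiently large $p$'' threshold effective (in particular comparison (i)) as the only real work is exactly the point the paper disposes of by that remark.
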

\begin{proof}
 This follows immediately from Lemma~\ref{lem:maxmu2} and \ref{lem:mml1l2}.
 Remark that the inequality $|\mu_2^{(\check{l}_1,\check{l}_2)}|-|\mu_1^{(\check{l}_1,\check{l}_2)}|>0$ in fact holds for $p\ge 67$.
\end{proof}

 Write $l_0=\Gauss{4\sqrt{p}}-2$ as     
\[
 l_0=24k+r
\]
 for $k\ge 0$ and $0\le r\le 23$. 
 In this case, we see that $p\in I_{r,k}\cap \mathbb{P}$ where 
\begin{align*}
 I_{r,k}
&=\bigl\{t\in\mathbb{R}\,\bigr|\,\Gauss{4\sqrt{t}}-2=24k+r\bigr\}\\
&=\left[36k^2+3(r+2)k+\frac{(r+2)^2}{16},36k^2+3(r+3)k+\frac{(r+3)^2}{16}\right).
\end{align*}
 In other words,
 $p$ can be written as $p=f_{r,c}(k)$ for some integers $k\ge 0$ and $c\in\mathbb{Z}$ 
 with $f_{r,c}(x)$ being a quadratic polynomial defined by 
\[
 f_{r,c}(x)=36x^2+3(r+3)x+c
\]
 and $-3k+\lceil{\frac{(r+2)^2}{16} \rceil}\le c\le \Gauss{\frac{(r+3)^2}{16}}$.

 For $0\le r\le 23$,
 let $I_{r}=\bigsqcup_{k\ge 0}I_{r,k}\cap\mathbb{P}$ and $C_r=\{\Gauss{\frac{(r+3)^2}{16}}+s\,|\,-5\le s\le 0\}$.
 Moreover, let $C'_r=\{c\in C_r\,|\,\text{$f_{r,c}(x)$ is irreducible over $\mathbb{Z}$}\}$.
 Furthermore, for $c\in C'_r$, define $k_{r,c}\in\mathbb{Z}$ as in Table~2. 
 The following is our main result,
 which gives a characterization for the exceptional primes.

\begin{theorem}
\label{thm:mainD}
 A prime $p\in I_r$ with $p\ge 67$ is exceptional if and only if
 it is of the form of $p=f_{r,c}(k)$ for some $c\in C'_r$ and $k\ge k_{r,c}$.
\end{theorem}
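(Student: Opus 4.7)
By Proposition~\ref{prop:lotrue}, for $p\ge 67$ the statement ``$p$ is exceptional'' is equivalent to the single inequality
\[
 |\mu_2^{(\check{l}_1,\check{l}_2)}|^2 \;\le\; \RB(l_0+1)^2 \;=\; 4(4p-l_0-2).
\]
The plan is to substitute $l_0 = 24k+r$ and $p = f_{r,c}(k) = 36k^2+3(r+3)k+c$ into both sides, Taylor expand the trigonometric formula~\eqref{eq:extream} to precision sufficient for identifying the constant surviving in the limit $k\to\infty$, and then determine which pairs $(r,c)$ make the inequality hold.

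Concretely, I would set $\theta=\pi/p$ and use $\sin(n\theta)/\sin\theta = n - n(n^2-1)\theta^2/6 + O(\theta^4)$ (together with the analogous expansion of $2\sin(n\theta/2)/\sin\theta$) to write
\[
 |\mu_2^{(\check{l}_1,\check{l}_2)}|^2 \;=\; (l_0+1)^2 \;-\; \frac{\pi^2(l_0+1)}{3p^2}A_r(k) \;+\; O(p^{-1}),
\]
where, with $L_1 = \check{l}_1-1+\delta$, $A_r(k) = L_1(L_1^2-1) + \tfrac{1}{4}\check{l}_2(\check{l}_2^2-4)$. Substituting $l_0=24k+r$, $p=36k^2+3(r+3)k+c$ cancels the leading-order $k$-dependence exactly, so that
\[
 \RB(l_0+1)^2 - |\mu_2^{(\check{l}_1,\check{l}_2)}|^2 \;=\; 16c - (r+3)^2 \;+\; \frac{\pi^2(l_0+1)A_r(k)}{3p^2} \;+\; O(k^{-1}).
\]
Since $\check{l}_1 \sim \tfrac{4}{3}\sqrt{p}$ and $\check{l}_2 \sim \tfrac{8}{3}\sqrt{p}$, I expect the correction term to tend to the explicit constant $\tfrac{256\pi^2}{27} \approx 93.55$ as $k\to\infty$, so the asymptotic exceptionality condition should become $16c \ge (r+3)^2 - \tfrac{256\pi^2}{27}$. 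Checking via the fractional part of $(r+3)^2/16$ (which depends on $r \bmod 8$) should show that this holds precisely for the six consecutive integers $c = \lfloor(r+3)^2/16\rfloor-5,\ldots,\lfloor(r+3)^2/16\rfloor$, i.e., $c \in C_r$. For each such $c$, I would then define $k_{r,c}$ to be the smallest $k$ at which the full non-asymptotic inequality holds, reading it off directly from \eqref{eq:extream} and tabulating as in Table~2. The split into $24 = \mathrm{lcm}(6,8)$ residue classes of $l_0$ is forced because $r\bmod 6$ fixes $(\check{l}_1,\check{l}_2)$ through $a_{r+1}$ and $\delta$, while $r\bmod 8$ controls the explicit members of $C_r$.

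To upgrade from the inequality to the characterization of exceptional \emph{primes}, I would use: if $f_{r,c}$ is reducible over $\mathbb{Z}$ as $g\cdot h$ with $\deg g,\deg h\ge 1$, then $f_{r,c}(k)=g(k)h(k)$ is composite for all $k$ beyond a small bound, so no such $p$ with $k\ge k_{r,c}$ is prime; this forces $c\in C'_r$. Conversely, any prime $p = f_{r,c}(k)$ with $c\in C'_r$ and $k\ge k_{r,c}$ satisfies the inequality by the very definition of $k_{r,c}$ and is hence exceptional. The hard part will be the Taylor step: uniformly controlling the $O(k^{-1})$ remainder sharply enough that the tabulated thresholds $k_{r,c}$ are provably correct rather than merely suggested by numerics. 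This will likely reduce to combining the asymptotic bound (valid for $k$ sufficiently large) with a finite numerical verification in the borderline small-$k$ range, done case-by-case over the 24 residue classes.
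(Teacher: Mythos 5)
Your proposal follows essentially the same route as the paper's proof: reduce via Proposition~\ref{prop:lotrue} to a single inequality between $|\mu_2^{(\check{l}_1,\check{l}_2)}|$ and $\RB(l_0+1)$, Taylor-expand along $p=f_{r,c}(k)$ to get the asymptotic criterion $16c>(r+3)^2-\tfrac{256\pi^2}{27}$ (equivalent to the paper's $27(r+3)^2-432c-256\pi^2<0$), discard reducible $f_{r,c}$ to pass from $C_r$ to $C'_r$, and fix the thresholds $k_{r,c}$ by a finite case check. The only cosmetic difference is that you square the inequality instead of expanding the difference $F_r(p)$ directly; your constants and the resulting set $C_r$ agree with the paper's.
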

\begin{proof}
 We first notice that,
 from the previous discussion with Proposition~\ref{prop:lotrue},
 $p$ is exceptional if and only if $|\mu_{2}(\check{l}_1,\check{l}_2)|\le \RB(l_0+1)$.
 To clarify when this inequality holds, 
 we introduce an interpolation function $F_r(t)$ of the difference between  
 $|\mu_{2}(\check{l}_1,\check{l}_2)|$ and $\RB(l_0+1)$ on $I_{r,k}$, that is,
\begin{align*}
 F_{r}(t)
&=\frac{\sin{\frac{\pi(8k+\frac{r+1+a_{r+1}}{3}-1+\delta)}{t}}}{\sin{\frac{\pi}{t}}}+(1-\delta)
+2\frac{\sin{\frac{\pi(16k+\frac{2(r+1)-a_{r+1}}{3})}{2t}}}{\sin{\frac{\pi}{t}}}\\
&\ \ \ -2\sqrt{4t-(24k+r+1)-1}.
\end{align*}
 Notice that 
 $(\check{l}_1,\check{l}_2)=\bigl(8k+\frac{r+1+a_{r+1}}{3},16k+\frac{2(r+1)-a_{r+1}}{3}\bigr)$
 when $l_0=24k+r$.
 One can see that $F_{r}(t)$ is monotone decreasing on $I_{r,k}$ for sufficiently large $k$.
 Moreover, at $t=p=f_{r,c}(k)\in I_{r,k}\cap\mathbb{P}$, one has
\[
 F_{r}(p)=\frac{27(r+3)^2-432c-256\pi^2}{1296}k^{-1}+O(k^{-2})
\]
 as $k\to\infty$ because 
\begin{align*}
 |\mu_{2}(\check{l_1},\check{l_2})|
&=\frac{\sin{\frac{\pi(8k+\frac{r+1+a_{r+1}}{3}-1+\delta)}{36k^2+3(r+3)k+c}}}{\sin{\frac{\pi}{36k^2+3(r+3)k+c}}}+(1-\delta)
+2\frac{\sin{\frac{\pi(16k+\frac{2(r+1)-a_{r+1}}{3})}{2(36k^2+3(r+3)k+c)}}}{\sin{\frac{\pi}{36k^2+3(r+3)k+c}}}\\
&=24k+(1+r)-\frac{16 \pi^2}{81}k^{-1}+O(k^{-2}),\\
 \mathrm{RB}(l_0+1)
&=2\sqrt{4(36k^2+3(r+3)k+c)-(24k+r+1)-1}\\
&=24 k+(1+r)-\frac{(r+3)^2-16c}{48}k^{-1}+O(k^{-2}).
\end{align*}
 This shows that $F_{r}(p)<0$ for sufficiently large $k$ if and only if
 $27(r+3)^2-432c-256\pi^2<0$, in other words, $\lceil{\frac{27(r+3)^2-256\pi^2}{432} \rceil} \le c$.
 Here, we see that $\lceil{\frac{27(r+3)^2-256\pi^2}{432} \rceil}=\Gauss{\frac{(r+3)^2}{16}}-5$ for all $0\le r\le 23$,
 which means that $c\in C_r$.
 Moreover, since $f_{r,c}(k)$ does not express any prime if $f_{r,c}(x)$ is not irreducible over $\mathbb{Z}$, $c$ must be in $C'_r$. 
 Furthermore, it is checked that, for each $0\le r\le 23$ and $c\in C'_r$,
 the inequalities $f_{r,c}(k)\ge 67$ and $F_{r}(p)<0$ for $p=f_{r,c}(k)$ hold if and only if $k\ge k_{r,c}$.
 This completes the proof of the theorem.
\end{proof}

\begin{table}[htbp]
\begin{center}
{\footnotesize
\begin{tabular}{c|c||c|c|c|c|c}
 $r$ & $c\in C'_r$ & $f_{r,c}(x)$ & $k_{r,c}$ & $J_{r,c}$ & $N_{r,c}$ & $\frac{C(f_{r,c})}{2\delta_r}$ \\
\hline
\hline
 $0$ & $-5$ & $36x^2+9x-5$ & $9$ & $7177, 11821, 20947, 52321, 121621$ & $9597$ & $0.24501$ \\
\hline
 $0$ & $-4$ & $36x^2+9x-4$ & $2$ & $347, 941, 1823, 4451, 6197$ & $17722$ & $0.45086$ \\
\hline
 $0$ & $-2$ & $36x^2+9x-2$ & $2$ & $349, 6199, 8233, 16063, 19249$ & $11061$ & $0.28123$ \\
\hline
\hline
 $1$ & $-1$ & $36x^2+12x-1$ & $2$ & $167, 359, 1367, 1847, 2399$ & $24414$ & $0.61666$ \\
\hline
\hline
 $2$ & $-4$ & $36x^2+15x-4$ & $9$ & $4517, 16187, 22871, 30707, 44621$ & $9685$ & $0.24501$ \\
\hline
 $2$ & $-2$ & $36x^2+15x-2$ & $1$ & $367, 1867, 3049, 4519, 6277$ & $13501$ & $0.34106$ \\
\hline
 $2$ & $-1$ & $36x^2+15x-1$ & $1$ & $173, 2423, 11933, 14699, 28643$ & $11181$ & $0.28123$ \\
\hline
\hline
 $3$ & $-1$ & $36x^2+18x-1$ & $2$ & $179, 647, 1889, 2447, 3779$ & $31692$ & $0.80725$ \\
\hline
 $3$ & $1$ & $36x^2+18x+1$ & $1$ & $181, 379, 991, 3079, 7309$ & $23288$ & $0.59109$ \\
\hline
\hline
 $4$ & $-1$ & $36x^2+21x-1$ & $1$ & $659, 7349, 9551, 12041, 33029$ & $10633$ & $0.26894$ \\
\hline
 $4$ & $1$ & $36x^2+21x+1$ & $1$ & $661, 1423, 2473, 5437, 7351$ & $15712$ & $0.40086$ \\
\hline
 $4$ & $2$ & $36x^2+21x+2$ & $2$ & $389, 1913, 6359, 13397, 16319$ & $15405$ & $0.39341$ \\
\hline
\hline
 $5$ & $-1$ & $36x^2+24x-1$ & $2$ & $191, 1019, 1439, 1931, 5471$ & $23332$ & $0.59109$ \\
\hline
 $5$ & $1$ & $36x^2+24x+1$ & $2$ & $193, 397, 673, 1021, 1933$ & $27255$ & $0.69166$ \\
\hline
\hline
 $6$ & $1$ & $36x^2+27x+1$ & $1$ & $199, 1459, 2521, 9649, 33211$ & $10609$ & $0.26894$ \\
\hline
 $6$ & $4$ & $36x^2+27x+4$ & $1$ & $67, 409, 1039, 3163, 4657$ & $15494$ & $0.39341$ \\
\hline
\hline
 $7$ & $1$ & $36x^2+30x+1$ & $2$ & $1051, 3187, 7477, 9697, 13567$ & $18210$ & $0.46393$ \\
\hline
 $7$ & $5$ & $36x^2+30x+5$ & $1$ & $71, 419, 701, 1481, 1979$ & $23192$ & $0.59109$ \\
\hline
\hline
 $8$ & $2$ & $36x^2+33x+2$ & $9$ & $4721, 8597, 23327, 61871, 81077$ & $9591$ & $0.24501$ \\
\hline
 $8$ & $4$ & $36x^2+33x+4$ & $1$ & $73, 1069, 1999, 3217, 4723$ & $13526$ & $0.34106$ \\
\hline
 $8$ & $5$ & $36x^2+33x+5$ & $1$ & $1499, 7523, 9749, 12263, 29153$ & $10933$ & $0.28123$ \\
\hline
\hline
 $9$ & $7$ & $36x^2+36x+7$ & $1$ & $79, 223, 439, 727, 1087$ & $24281$ & $0.61666$ \\
\hline
\hline
 $10$ & $5$ & $36x^2+39x+5$ & $9$ & $5657, 7607, 18287, 65147, 99377$ & $9537$ & $0.24501$ \\
\hline
 $10$ & $7$ & $36x^2+39x+7$ & $1$ & $229, 739, 5659, 12373, 15187$ & $13322$ & $0.34106$ \\
\hline
 $10$ & $8$ & $36x^2+39x+8$ & $1$ & $83, 449, 1103, 4793, 6599$ & $11175$ & $0.28123$ \\
\hline
\hline
 $11$ & $7$ & $36x^2+42x+7$ & $2$ & $457, 751, 1117, 2647, 3301$ & $18110$ & $0.46393$ \\
\hline
 $11$ & $11$ & $36x^2+42x+11$ & $1$ & $89, 239, 461, 1559, 2069$ & $23297$ & $0.59109$ \\
\hline
\hline
 $12$ & $10$ & $36x^2+45x+10$ & $1$ & $2089, 3331, 4861, 6679, 16831$ & $10588$ & $0.26894$ \\
\hline
 $12$ & $13$ & $36x^2+45x+13$ & $1$ & $769, 1579, 2677, 5737, 7699$ & $15505$ & $0.39341$ \\
\hline
\hline
 $13$ & $11$ & $36x^2+48x+11$ & $1$ & $251, 479, 1151, 2111, 2699$ & $23137$ & $0.59109$ \\
\hline
 $13$ & $13$ & $36x^2+48x+13$ & $1$ & $97, 1153, 1597, 2113, 3361$ & $27257$ & $0.69166$ \\
\hline
\hline
 $14$ & $14$ & $36x^2+51x+14$ & $1$ & $101, 491, 3389, 4931, 6761$ & $10559$ & $0.26894$ \\
\hline
 $14$ & $16$ & $36x^2+51x+16$ & $1$ & $103, 1171, 2137, 3391, 4933$ & $15790$ & $0.40086$ \\
\hline
 $14$ & $17$ & $36x^2+51x+17$ & $1$ & $263, 797, 1619, 2729, 4127$ & $15393$ & $0.39341$ \\
\hline
\hline
 $15$ & $17$ & $36x^2+54x+17$ & $1$ & $107, 269, 503, 809, 1187$ & $31685$ & $0.80725$ \\
\hline
 $15$ & $19$ & $36x^2+54x+19$ & $1$ & $109, 271, 811, 2161, 4159$ & $23208$ & $0.59109$ \\
\hline
\hline
 $16$ & $17$ & $36x^2+57x+17$ & $8$ & $2777, 29837, 34127, 54167, 72221$ & $9606$ & $0.24501$ \\
\hline
 $16$ & $19$ & $36x^2+57x+19$ & $1$ & $277, 823, 1657, 7873, 15559$ & $13448$ & $0.34106$ \\
\hline
 $16$ & $20$ & $36x^2+57x+20$ & $1$ & $113, 3449, 5003, 11393, 17093$ & $11096$ & $0.28123$ \\
\hline
\hline
 $17$ & $23$ & $36x^2+60x+23$ & $1$ & $839, 1223, 2207, 5039, 5927$ & $24229$ & $0.61666$ \\
\hline
\hline
 $18$ & $22$ & $36x^2+63x+22$ & $8$ & $9067, 11497, 24097, 27967, 36571$ & $9662$ & $0.24501$ \\
\hline
 $18$ & $23$ & $36x^2+63x+23$ & $1$ & $293, 1697, 4253, 10247, 12821$ & $17614$ & $0.45086$ \\
\hline
 $18$ & $25$ & $36x^2+63x+25$ & $1$ & $853, 1699, 2833, 7963, 12823$ & $10918$ & $0.28123$ \\
\hline
\hline
 $19$ & $25$ & $36x^2+66x+25$ & $1$ & $127, 547, 2251, 2857, 5107$ & $18271$ & $0.46393$ \\
\hline
 $19$ & $29$ & $36x^2+66x+29$ & $1$ & $131, 1259, 1721, 2861, 3539$ & $23270$ & $0.59109$ \\
\hline
\hline
 $20$ & $29$ & $36x^2+69x+29$ & $1$ & $311, 881, 15809, 34499, 43991$ & $10567$ & $0.26894$ \\
\hline
 $20$ & $31$ & $36x^2+69x+31$ & $1$ & $313, 883, 1741, 2887, 6043$ & $15875$ & $0.40086$ \\
\hline
 $20$ & $32$ & $36x^2+69x+32$ & $1$ & $137, 563, 1277, 5147, 7013$ & $15649$ & $0.39341$ \\
\hline
\hline
 $21$ & $31$ & $36x^2+72x+31$ & $1$ & $139, 571, 1291, 1759, 5179$ & $23262$ & $0.59109$ \\
\hline
\hline
 $22$ & $35$ & $36x^2+75x+35$ & $1$ & $911, 2939, 13049, 22571, 26321$ & $10591$ & $0.26894$ \\
\hline
 $22$ & $37$ & $36x^2+75x+37$ & $1$ & $331, 1783, 6121, 10453, 15937$ & $15764$ & $0.40086$ \\
\hline
 $22$ & $38$ & $36x^2+75x+38$ & $1$ & $149, 587, 11717, 17489, 20807$ & $15460$ & $0.39341$ \\
\hline
\hline
 $23$ & $37$ & $36x^2+78x+37$ & $2$ & $337, 1327, 1801, 2347, 10501$ & $18177$ & $0.46393$ \\
\hline
 $23$ & $41$ & $36x^2+78x+41$ & $1$ & $599, 929, 2351, 2969, 3659$ & $23223$ & $0.59109$ 
\end{tabular}
}
\end{center}
\caption{The fifty-four quadratic polynomials $f_{r,c}(x)$ for $0\le r\le 23$ and $c\in C'_r$.}
\end{table}

 For $0\le r\le 23$ and $c\in C'_r$,
 let
\[
 J_{r,c}=\{p\,|\,\text{$p=f_{r,c}(k)\in I_r$ for some $k\ge k_{r,c}$}\}.
\]
 Namely, $J_{r,c}$ is the set of exceptional primes $p$ of the form of $p=f_{r,c}(k)$.
 We show the first five such primes in Table~2 for each $r$ and $c$.

 The classical Hardy-Littlewood conjecture \cite[Conjecture~F]{HardyLittlewood1923} asserts that
 if a quadratic polynomial $f(x)=ax^2+bx+c$ with $a,b,c\in\mathbb{Z}$ satisfies the conditions that
 $a>0$, $a,b$ and $c$ are relatively prime, $a+b$ and $c$ are not both even
 and the discriminant $D(f)=b^2-4ac$ of $f$ is not a square,
 then there are infinitely many primes represented by $f(x)$ and, 
 moreover, that
\[
 \pi(f;x)
=\#\{p\le x\,|\,\text{$p=f(k)\in\mathbb{P}$ for some $k\ge 0$}\}
\]
 obeys the asymptotic behavior 
\[
 \pi(f;x)\sim \frac{\varepsilon(f)C(f)}{\sqrt{a}}\prod_{p\mid a,\ p\mid b \atop p\ge 3}\frac{p}{p-1}\cdot \frac{\sqrt{x}}{\log{x}}
\]
 as $x\to\infty$ where $\varepsilon(f)$ is $1$ if $a+b$ is odd and $2$ otherwise and 
\[
 C(f)=\prod_{p\,\nmid\,a \atop p\ge 3}\Bigl(1-\frac{\bigl(\frac{D(f)}{p}\bigr)}{p-1}\Bigr)
\]
 with $\bigl(\frac{D}{p})$ being the Legendre symbol.
 The constant $C(f)$ is called the Hardy-Littlewood constant of $f$.
 Because the polynomial $f_{r,c}(x)$ satisfies the above conditions, 
 we can expect that it indeed represents infinitely many primes.
 In our case, it may hold that  
\[
 \pi(f_{r,c};x)\sim \frac{C(f_{r,c})}{2\delta_r}\frac{\sqrt{x}}{\log{x}}, \quad
 C(f_{r,c})=\prod_{p\ge 5}\Bigl(1-\frac{\bigl(\frac{(r+3)^2-16c}{p}\bigr)}{p-1}\Bigr).
\]
 We also show both the numerical value of $\frac{C(f_{r,c})}{2\delta_r}$
 and the exact number of $N_{r,c}=\#\{p\le 10^{12}\,|\,\text{$p=f_{r,c}(k)\in I_r$ for some $k\ge k_{r,c}$}\}$ in Table~2.
 Notice that $\frac{\sqrt{x}}{\log{x}}=36191.20\ldots$ when $x=10^{12}$.

 The following is immediate from Theorem~\ref{thm:mainD}.
 
\begin{corollary}
 There exists infinitely many exceptional primes if  
 the Hardy-Littlewood conjecture is true for at least one of $f_{r,c}(x)$ for $0\le r\le 23$ and $c\in C'_r$.
\end{corollary}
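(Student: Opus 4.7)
The plan is to derive the corollary directly from Theorem~\ref{thm:mainD} together with the definition of the sets $J_{r,c}$, by showing that if the Hardy-Littlewood conjecture holds for even a single $f_{r,c}(x)$ with $0\le r\le 23$ and $c\in C'_r$, then $J_{r,c}$ is infinite, and every element of $J_{r,c}$ is an exceptional prime.

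First I would fix some $r\in\{0,1,\ldots,23\}$ and $c\in C'_r$ for which the Hardy-Littlewood conjecture is assumed to hold. By the very definition of $C'_r$, the polynomial $f_{r,c}(x)=36x^2+3(r+3)x+c$ is irreducible over $\mathbb{Z}$, so its discriminant $D(f_{r,c})=9(r+3)^2-144c$ is not a square; one can also check that the remaining hypotheses of the Hardy-Littlewood conjecture (positivity of the leading coefficient, coprimality of coefficients, parity condition on leading and constant terms) hold for each of the fifty-four polynomials in Table~2. Thus the conjecture, when applied to $f_{r,c}(x)$, yields $\pi(f_{r,c};x)\to\infty$ as $x\to\infty$; in particular there are infinitely many integers $k\ge 0$ such that $f_{r,c}(k)$ is prime.

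Next I would show that all but finitely many of these primes actually belong to $J_{r,c}$. Recall from the construction preceding Theorem~\ref{thm:mainD} that a prime of the form $p=f_{r,c}(k)$ lies in the interval $I_{r,k}$ (hence in $I_r$) provided
\[
 -3k+\Bigl\lceil\tfrac{(r+2)^2}{16}\Bigr\rceil\le c\le \Bigl\lfloor\tfrac{(r+3)^2}{16}\Bigr\rfloor.
\]
The upper bound is independent of $k$ and is satisfied by every $c\in C_r\supseteq C'_r$; the lower bound is automatically satisfied once $k$ is large enough (precisely, once $k\ge \tfrac13(\lceil (r+2)^2/16\rceil-c)$). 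Consequently, for all sufficiently large $k$, the membership $p=f_{r,c}(k)\in I_r$ comes for free. Taking $k$ additionally to satisfy $k\ge k_{r,c}$ (which in particular enforces $p\ge 67$, as recorded in Table~2), every prime $p=f_{r,c}(k)$ with $k$ large enough lies in $J_{r,c}$.

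Finally I would invoke Theorem~\ref{thm:mainD}: since $p\in I_r$, $p\ge 67$, $c\in C'_r$, and $k\ge k_{r,c}$, the prime $p$ is exceptional. The Hardy-Littlewood conjecture supplies infinitely many such $k$, and among them all but finitely many satisfy the auxiliary thresholds, producing infinitely many exceptional primes. The argument is essentially bookkeeping; the only mild step worth underlining is the observation that the $k$-dependent lower bound on $c$ eventually disappears, so that a \emph{single} fixed $c\in C'_r$ is compatible with membership in $I_{r,k}$ for all large $k$. No further obstacle arises, since Theorem~\ref{thm:mainD} has already absorbed the analytic and combinatorial work.
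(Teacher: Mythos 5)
Your argument is correct and is precisely the reasoning the paper leaves implicit when it declares the corollary ``immediate from Theorem~\ref{thm:mainD}'': Hardy--Littlewood for a single $f_{r,c}$ gives infinitely many $k$ with $f_{r,c}(k)$ prime, the $k$-dependent lower bound on $c$ for membership in $I_{r,k}$ evaporates for large $k$, and Theorem~\ref{thm:mainD} then certifies each such prime with $k\ge k_{r,c}$ as exceptional. Your explicit observation that a fixed $c\in C'_r$ is compatible with $I_{r,k}$ for all large $k$ is exactly the bookkeeping the paper omits, so this is the same proof, just written out.
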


 We notice that, if we can show that
 there exists infinitely many exceptional primes (in the frame work of the graph theory) on the other hand,
 then it implies that at least one of $f_{r,c}(t)$ represents infinitely many primes.

 We also remark that though we omit to show here but
 the existence of infinitely many ordinary primes is similarly verified
 by using Dirichlet's theorem on arithmetic progressions as \cite{HiranoKatataYamasaki1,HiranoKatataYamasaki2}.

%%%%%%%%%%%%%%%%%%%%%%%%%%%%%%%%%%%%%%%%%%%%%%%%%%%%%%%%%%%%%%%%%%%%%%%%%%%%%%%%%%%%%%%%%

%===============   Reference  ===============================================

\bigskip 

\noindent
\textsc{Yoshinori YAMASAKI}\\
 Graduate School of Science and Engineering, Ehime University,\\
 Bunkyo-cho, Matsuyama, 790-8577 JAPAN.\\
 \texttt{yamasaki@math.sci.ehime-u.ac.jp}

\end{document}